
\documentclass[12pt]{amsart}%
\usepackage{graphicx,subcaption}
\usepackage{lipsum}
\usepackage{mwe}
\usepackage[small,nohug,heads=littlevee]{diagrams}
\usepackage{tikz}
\usetikzlibrary{matrix,arrows,decorations.pathmorphing}
\usepackage{amscd}
\usepackage{geometry}
\usepackage{amsmath}
\usepackage{amsfonts}
\usepackage{amssymb}%
\usepackage{enumerate}
\usepackage{extpfeil}
\usepackage{cleveref}
\setcounter{MaxMatrixCols}{30}
\providecommand{\U}[1]{\protect\rule{.1in}{.1in}}
\newtheorem{theorem}{Theorem}[section]
\theoremstyle{plain}

\newtheorem{corollary}[theorem]{Corollary}

\newtheorem{lemma}[theorem]{Lemma}
\newtheorem{question}{Question}
\newtheorem{proposition}[theorem]{Proposition}
\theoremstyle{definition}
\newtheorem{definition}{Definition}[section]
\newtheorem{example}{Example}

\newtheorem{remark}{Remark}

\numberwithin{equation}{section}

\setlength{\textheight} {8.5in}
\setlength{\textwidth} {6.0in}
\setlength{\topmargin} {0.0in}
\setlength{\evensidemargin} {0.25in}
\setlength{\oddsidemargin} {0.25in}
\usepackage [autostyle, english = american]{csquotes}
\MakeOuterQuote{"}

\makeatletter
\def\oversortoftilde#1{\mathop{\vbox{\m@th\ialign{##\crcr\noalign{\kern3\p@}%
      \sortoftildefill\crcr\noalign{\kern3\p@\nointerlineskip}%
      $\hfil\displaystyle{#1}\hfil$\crcr}}}\limits}

\def\sortoftildefill{$\m@th \setbox\z@\hbox{$\braceld$}%
  \braceld\leaders\vrule \@height\ht\z@ \@depth\z@\hfill\braceru$}

\makeatother

\begin{document}
\title[$\mathcal{Z}$-compactifiable manifolds which are not pseudo-collarable]{$\mathcal{Z}$-compactifiable manifolds which are not pseudo-collarable}
\author{Shijie Gu}
\address{Department of Mathematical Sciences\\
Central Connecticut State University, New Britain, CT, 06053}
\email{sgu@ccsu.edu}
\thanks{}
\date{September 19th, 2021.}
\keywords{ends, inward tame, semistable, peripherally perfectly semistable, homotopy collar, pseudo-collar, $\mathcal{Z}$-compactification, twisted Whitehead double, hypoabelian group,
fibered knot, completable, Wall finiteness obstruction.}

\begin{abstract}
It is shown that there exist $\mathcal{Z}$-compactifiable manifolds
with noncompact boundary which fail to be pseudo-collarable.
\end{abstract}
\maketitle
\section{Introduction}
An $m$-manifold $M^{m}$ with (possibly empty) boundary is \emph{completable} if there exists a compact manifold $\widehat{M}^{m}$ and a compactum $C\subseteq\partial\widehat{M}^{m}$ such that $\widehat{M}^{m}\backslash C$ is homeomorphic to $M^{m}$. In this case $\widehat{M}^{m}$ is called a \emph{(manifold) completion }of $M^{m}$.\footnote{In literature, when $M^m$ has one end, completing a manifold is also known as finding a collar of the end.} A full characterization for completable manifolds, in the piecewise-linear (PL), topological (TOP) and smooth (DIFF) categories, was achieved through combined efforts of topologists over the past five decades; see \cite{Sie65}, \cite{Tuc74}, \cite{O'B83} and \cite{GG20}.

\begin{theorem}
\emph{[Manifold Completion Theorem]}\label{Th: Completion Theorem} An
$m$-manifold $M^{m}$ ($m\neq 4,5$) is completable if and only if

\begin{enumerate}
\item \label{Char1}$M^{m}$ is inward tame,

\item \label{Char2}$M^{m}$ is peripherally $\pi_{1}$-stable at infinity,

\item \label{Char3}$\sigma_{\infty}(M^{m})\in\underleftarrow{\lim}\left\{
\widetilde{K}_{0}(\pi_{1}(N))\mid N\text{ a clean neighborhood of
infinity}\right\}  $ is zero, and

\item \label{Char4}$\tau_{\infty}\left(  M^{m}\right)  \in\underleftarrow{\lim
}^{1}\left\{  \operatorname*{Wh}(\pi_{1}(N))\mid N\text{ a clean neighborhood
of infinity}\right\}  $ is zero.\bigskip
\end{enumerate}
\end{theorem}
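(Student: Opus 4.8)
The plan is to prove necessity and sufficiency separately, working throughout with a cofinal nested sequence of clean neighborhoods of infinity $N_{0}\supseteq N_{1}\supseteq N_{2}\supseteq\cdots$ with bicollared frontiers and empty intersection, chosen so that the frontiers $\partial N_{i}$ carry the relevant peripheral boundary data. The four listed conditions are precisely the obstructions one meets, in order of increasing subtlety, when trying to upgrade such a sequence to a genuine product collar on the end; so the argument is an inductive collaring construction supported by obstruction-theoretic bookkeeping that records when the induction can be carried out coherently.

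Necessity is the routine direction. Given a completion $\widehat{M}^{m}$ with $M^{m}\cong\widehat{M}^{m}\setminus C$, I would take a neighborhood basis of the compactum $C$ in $\widehat{M}^{m}$ and intersect with $M^{m}$ to produce the sequence $\{N_{i}\}$. Since $\widehat{M}^{m}$ is a compact ANR and $C$ is compact, each $N_{i}$ deformation retracts into a compact subset of itself, giving inward tameness; the same retractions, taken $\pi_{1}$-injectively near the peripheral data, show that the pro-fundamental-group system is stable. Finally, because the dominations here all arise from one honest compact neighborhood, the Wall obstructions of the $N_{i}$ and the Whitehead torsions of the bonding maps are realized compatibly by the collar, forcing both $\sigma_{\infty}(M^{m})$ and $\tau_{\infty}(M^{m})$ to vanish.

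Sufficiency is the substantive direction, built in stages. Inward tameness first makes each $N_{i}$ a finitely dominated ANR. Peripheral $\pi_{1}$-stability then lets me pass to a cofinal subsequence on which the bonding homomorphisms $\pi_{1}(N_{i+1})\to\pi_{1}(N_{i})$ are isomorphisms, fixing a single group $G$, the stable fundamental group of the end, that serves as the coefficient group for the remaining obstructions. Next, using handle trading and the Whitney trick (here the hypothesis $m\geq 6$ is used), I would simplify the handle structure of each cobordism $A_{i}=N_{i}\setminus\operatorname{int}N_{i+1}$ relative to its frontiers so that $A_{i}$ becomes an $h$-cobordism from $\partial N_{i}$ to $\partial N_{i+1}$. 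The vanishing of $\sigma_{\infty}(M^{m})$ in $\underleftarrow{\lim}\{\widetilde{K}_{0}(\pi_{1}(N))\}$ is exactly what allows these simplifications to be made compatibly across all stages: it upgrades the finite domination of each $N_{i}$ to a genuine finite homotopy type, so that the end acquires a homotopy-collar (pseudo-collar) structure.

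The main obstacle is the final upgrade from homotopy collar to true product collar, and this is where the fourth condition does its work. Once each $A_{i}$ is an $h$-cobordism it carries a Whitehead torsion $\tau_{i}\in\operatorname{Wh}(\pi_{1}(N_{i}))$, and by the $s$-cobordism theorem $A_{i}$ is a product exactly when $\tau_{i}=0$. I retain the freedom to slide the separating wall $\partial N_{i}$, which modifies the adjacent torsions by a difference (coboundary) operation; whether the $\tau_{i}$ can be simultaneously killed by such slides is governed precisely by the derived inverse limit, and the resulting total obstruction is the class $\tau_{\infty}(M^{m})\in\underleftarrow{\lim}^{1}\{\operatorname{Wh}(\pi_{1}(N))\}$. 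When $\tau_{\infty}(M^{m})=0$, a coherent choice of product structures exists, the telescoped collars assemble to a product neighborhood of infinity $\partial N_{0}\times[0,\infty)$, and adjoining the ideal boundary yields the completion. Organizing this infinite tower of torsions and recognizing the sliding bookkeeping as the standard $\underleftarrow{\lim}^{1}$ sequence is the crux of the whole argument; the cases $m\leq 3$ are disposed of by classical low-dimensional methods, while $m=4,5$ are excluded precisely because the $s$-cobordism theorem and Whitney trick underlying the last two stages are unavailable there.
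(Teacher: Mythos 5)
This theorem is stated in the paper as background and is not proved there; it is quoted from the literature (Siebenmann, Tucker, O'Brien, and Guilbault--Gu \cite{GG20}), so there is no in-paper proof to compare against. Judged on its own, your sketch correctly reproduces the classical Siebenmann outline --- finite domination from inward tameness, stabilizing $\operatorname{pro}$-$\pi_1$, handle trading to get $h$-cobordisms between frontiers with $\sigma_\infty$ as the finiteness obstruction, and $\tau_\infty\in\underleftarrow{\lim}^1\operatorname{Wh}$ as the obstruction to simultaneously trivializing the tower of torsions --- and the necessity direction is handled in the standard (routine) way.

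The genuine gap is that your argument is really a proof of the classical theorem for open manifolds (or manifolds with compact boundary), not of the theorem as stated. The statement allows $\partial M^m$ to be noncompact, and Condition (2) is \emph{peripheral} $\pi_1$-stability: the relevant inverse sequences are $\pi_1(\partial_M N^j\cup V_i)$ for the components $N^j$ of $0$-neighborhoods of infinity, not the absolute sequences $\pi_1(N_{i+1})\to\pi_1(N_i)$ that you stabilize. Your sufficiency argument never touches the boundary: completing $M^m$ forces a completion of the $(m-1)$-manifold $\partial M^m$ (the compactum $C$ lies in $\partial\widehat{M}^m$), so the actual proof in \cite{GG20} must first complete $\partial M^m$ and then run a \emph{relative} version of the handle-trading and $s$-cobordism steps on the cobordisms $A_i$, which now carry a lateral boundary piece $\partial_M A_i$ along $\partial M^m$; the peripheral hypothesis is exactly what controls the fundamental-group data of these pairs. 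Absolute $\pi_1$-stability of the ends neither implies nor is implied by the peripheral condition in general, so substituting one for the other is not a cosmetic simplification --- without addressing this, the construction does not produce a completion of $M^m$ but at best a collar structure on $\operatorname{int}M^m$ away from the boundary.
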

\begin{remark}
When $m =1$, the characterization is trivial. For $m = 2,3$, Condition (\ref{Char1}) alone would be necessary and sufficient. See \cite{GG20} and \cite{Tuc74}. The Manifold completion theorem fails in dimension 4. Counterexamples were discovered in \cite{Wei87} and \cite{KS88}. In TOP, Theorem 1.1 holds in dimension 5 provided Condition
(\ref{Char2}) is strengthened to ensure the existence of arbitrarily small neighborhoods
of infinity with stable peripheral pro-$\pi_1$ groups that are "good" in the language of \cite{FQ90}.
\end{remark}

Although Condition (\ref{Char2}) is necessary in order for such a completion to exist, such condition is too rigid to characterize many exotic examples related to current research trends in topology and geometric group theory. For instance, the exotic universal covering spaces produced by Davis \cite{Dav83} are not completable (because Condition (\ref{Char2}) fails) yet their ends exhibit nice geometric structure. More examples such as (open) manifolds that satisfy Conditions (\ref{Char1}), (\ref{Char3}) and (\ref{Char4}) but not Condition (\ref{Char2}) can be found in \cite[Thm.1.3]{GT03}. Is there a way to characterize manifolds sharing the structure with these manifolds at infinity? When $M^m$ is an open manifold (or more generally, a manifold with compact boundary), by weakening Condition (\ref{Char2}), Guilbault \cite{Gui00} initiated a program to answer this question, which can be viewed as a natural generalization of Siebenmann's collaring theorem for manifolds with compact boundary. Define a manifold neighborhood of infinity $N$ in a manifold $M^m$ to be a \emph{homotopy collar} provided $\operatorname{Fr}N \hookrightarrow N$ is a homotopy equivalence. A \emph{pseudo-collar} is a homotopy collar which contains arbitrarily small homotopy collar neighborhoods of infinity. A manifold is \emph{pseudo-collarable} if it contains a pseudo-collar neighborhood of infinity. The idea of pseudo-collars and a detailed motivation for the definition are nicely exposited in \cite{Gui00}. Through a series of papers \cite{Gui00, GT03, GT06}, a characterzation for pseudo-collarable manifolds with compact boundary (hence, finitely many ends) was provided. In a recent work, the author \cite{Gu20} developed a complete characterization for high-dimensional manifolds with (possible noncompact) boundary.

\begin{theorem}
[Pseudo-collarability characterization theorem]\label{Th: Characterization Theorem} An $m$-manifold $M^m$ ($m\geq 6$) is pseudo-collarable iff each of the
following conditions holds:

\begin{enumerate}[(a)]

\item \label{condition a} $M^{m}$ is inward tame,

\item \label{condition b} $M^{m}$ is peripherally perfectly $\pi_1$-semistable at infinity, and

\item \label{condition c} $\sigma_{\infty}(M^{m})\in\underleftarrow{\lim}\left\{
\widetilde{K}_{0}(\pi_{1}(N))\mid N\text{ a clean neighborhood of
infinity}\right\}  $ is zero. \bigskip
\end{enumerate}
\end{theorem}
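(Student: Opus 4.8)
The plan is to prove the biconditional by treating necessity and sufficiency separately, in each direction adapting the handle-theoretic program of Guilbault and Tinsley \cite{Gui00,GT03,GT06} for compact boundary so that every geometric move is performed properly and relative to the (possibly noncompact) boundary $\partial M^{m}$.

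\emph{Necessity.} Assume $M^{m}$ is pseudo-collarable and fix a cofinal nested sequence $N_{0}\supseteq N_{1}\supseteq N_{2}\supseteq\cdots$ of homotopy collar neighborhoods of infinity. The defining equivalences $\operatorname{Fr}N_{i}\hookrightarrow N_{i}$ present each $N_{i}$, up to homotopy, by the finitely dominated codimension-one manifold $\operatorname{Fr}N_{i}$, which yields inward tameness (a); since these presentations are compatible with the bonding inclusions, a compatibility argument over the tower shows the inverse-limit finiteness obstruction $\sigma_{\infty}(M^{m})$ vanishes, giving (c). The substantive point is (b): I would show the peripheral pro-$\pi_{1}$ tower of the end is pro-isomorphic to an inverse sequence of surjections with perfect kernels. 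This is intrinsic to the homotopy-collar condition, since the inclusion of one homotopy collar into the next behaves like a manifold analogue of the Quillen plus construction, and such a map induces on $\pi_{1}$ precisely a surjection with perfect kernel.

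\emph{Sufficiency.} This is the heart of the proof. Given (a)--(c), I would inductively manufacture, inside each prescribed neighborhood of infinity, a homotopy collar, thereby exhibiting a pseudo-collar. Starting from a clean neighborhood of infinity $N$, I would first invoke (b) to pass to a cofinal subsequence and modify frontiers so that the bonds $\pi_{1}(N_{i+1})\to\pi_{1}(N_{i})$ become surjective with perfect kernel, and use inward tameness together with (c) to arrange that each stage has the finite homotopy type of its frontier; this is where the vanishing of $\sigma_{\infty}$ is consumed, as it removes the $\widetilde{K}_{0}$-obstruction to replacing a finitely dominated stage by a finite manifold model. The geometric core then converts the algebraic perfect-kernel data into a homotopy collar: I would attach $2$-handles to $\operatorname{Fr}N$ along loops that normally generate the perfect kernel and, exploiting perfectness (each such element is a product of commutators), cancel the spurious homology these create by complementary $3$-handles. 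The trace of this manifold plus construction is a one-sided $h$-cobordism whose outer frontier includes by a homotopy equivalence, i.e.\ a homotopy collar; iterating produces arbitrarily small ones. The hypothesis $m\geq 6$ enters precisely here, guaranteeing that the frontiers have dimension at least $5$ so that the Whitney trick and the requisite handle trading and cancellation are available. In contrast to \thmref{Th: Completion Theorem}, no $\underleftarrow{\lim}^{1}\operatorname{Wh}$ hypothesis is imposed, because the homotopy-collar structure tolerates nontrivial torsion; accordingly (c) is the only $K$-theoretic input the construction requires.

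\emph{The main obstacle.} The principal difficulty, and the only essentially new ingredient beyond the compact-boundary theory, is that each frontier $\operatorname{Fr}N$ is now a noncompact manifold whose boundary runs off to infinity inside $\partial M^{m}$, so the plus-construction handle trading must be carried out properly and relative to this noncompact boundary without disturbing the boundary's own end structure. Concretely, I would need a relative, controlled version of the $2$- and $3$-handle manipulations in which the handles are introduced locally finitely and compatibly with a collar of $\partial M^{m}$, and in which the $\pi_{1}$- and finiteness-data are tracked peripherally. Keeping the interior and boundary end structures simultaneously under control---the very reason the hypotheses are phrased peripherally---is the step I expect to demand the most care, and it is what forces the argument beyond a verbatim appeal to \cite{GT06}.
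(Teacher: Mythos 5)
First, a point of order: the paper does not prove this theorem. It is quoted verbatim from \cite{Gu20}, so there is no in-paper argument to compare yours against line by line. Judged against the known proof (the Guilbault--Tinsley program of \cite{Gui00,GT03,GT06} extended to noncompact boundary in \cite{Gu20}), your outline has the right global shape: necessity via the observation that the region between nested homotopy collars is a one-sided $h$-cobordism, sufficiency via a manifold plus construction ($2$-handles along normal generators of the perfect kernel cancelled by complementary $3$-handles), with $m\geq 6$ consumed by Whitney-trick general position on the frontiers, and with no $\underleftarrow{\lim}^{1}\operatorname{Wh}$ hypothesis needed because homotopy collars tolerate torsion.

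There are, however, two genuine gaps. First, you have misidentified where the noncompactness actually lives. Under the paper's definitions a clean neighborhood of infinity $N$ satisfies $\overline{M^{m}\backslash N}$ compact, so $\operatorname{Fr}N$ is a \emph{compact} codimension-one submanifold; it is not ``a noncompact manifold whose boundary runs off to infinity.'' (In particular a homotopy collar automatically has finite homotopy type, not merely finite domination, which is what actually delivers (a) and (c).) The noncompact object is $\partial_{M}N=N\cap\partial M^{m}$, and this is exactly why conditions (b) is \emph{peripheral}: it concerns the towers $\pi_{1}(\partial_{M}N^{j}\cup V_{i})$ rather than $\pi_{1}(V_{i})$. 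Your necessity argument only addresses the absolute towers --- the compact-boundary argument --- and never shows that the surjectivity-with-perfect-kernel property holds for the relative groups $\pi_{1}(\partial_{M}N^{j}\cup V_{i})$, which is the new content of \cite{Gu20}. Second, the sufficiency direction as written is a plan rather than a proof: the proper, boundary-relative handle trading and the peripheral $\pi_{1}$/finiteness bookkeeping that you explicitly defer (``I would need a relative, controlled version\dots'') are precisely the technical heart of the argument, so the hardest step is named but not carried out.
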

\begin{remark}
The dimension restriction $m\geq 6$ is only required for the sufficiency of Theorem \ref{Th: Characterization Theorem}, see \cite[Section 6]{Gu20}. Compared to Theorem \ref{Th: Completion Theorem}, when $m\leq 3$, pseudo-collarability is just the ordinary collarability.
When all of the groups in Condition (\ref{condition b}) involved are "good" in
the sense of Freedman and Quinn, the conclusion holds for $m=5$ in TOP. Just like Theorem 
\ref{Th: Completion Theorem}, Theorem \ref{Th: Characterization Theorem} fails in dimension 4. Counterexamples are again the ones from  \cite{Wei87} and \cite{KS88}.
\end{remark}

Theorem \ref{Th: Characterization Theorem} captures the pseudo-collar structure of the ends, however, it is unclear if pseudo-collarability guarantees a compactification called $\mathcal{Z}$-compactification\footnote{It is an open question that whether pseudo-collarability and Condition (\ref{Char4}) of Theorem \ref{Th: Completion Theorem} are
sufficient for manifolds to be $\mathcal{Z}$-compactifiable, cf. \cite[Question 2]{Gu20} and \cite[Section 5]{GT03}.}, which is a generalization of completion intended for preserving the homotopy type of the original space.  A compactification
$\widehat{X}=X\sqcup Z$ of a space $X$ is a $\mathcal{Z}$\emph{-compactification} if, for every open set $U\subseteq\widehat{X}$,
$U\backslash Z\hookrightarrow U$ is a homotopy equivalence, where $\sqcup$ denotes a disjoint union. The motivation first came from the modification of the manifold completion applied to Hilbert cube manifolds in \cite{CS76}. This compactification has been proven to be useful in both
geometric group theory and manifold topology, for example, in attacks on the Borel and Novikov Conjectures. An arguably major open problem about $\mathcal{Z}$-compactification is the existence of a characterization of $\mathcal{Z}$-compactifiable manifolds by completely removing Condition (\ref{Char2}) from Theorem \ref{Th: Completion Theorem}, cf. \cite{CS76}, \cite{GT03} and \cite{GG20}.
\begin{question}\label{big question}
Are Conditions (\ref{Char1}), (\ref{Char3}) and (\ref{Char4}) of Theorem \ref{Th: Completion Theorem} sufficient for manifolds to be $\mathcal{Z}$-compactifiable?
\end{question}
In dimensions $\leq 3$, Condition (\ref{Char1}) ensures that $\mathcal{Z}$-compactifications are just completions. Thus, Question \ref{big question} is aimed at $\mathcal{Z}$-compactifications of high-dimensional manifolds.  Although it's unknown whether these conditions are sufficient, in \cite[Thm. 1.2]{GG20}, Guilbault and the author provided a best possible result saying that
an $m$-manifold $M^m$ ($m\geq 5$) satisfies Conditions (\ref{Char1}), (\ref{Char3}) and (\ref{Char4}) of Theorem \ref{Th: Completion Theorem}, if and only if $M^m \times [0,1]$ admits a $\mathcal{Z}$-compactification.

Clearly, completable manifolds are both pseudo-collarable and $\mathcal{Z}$-compactifiable. It is known that various manifolds such as Hilbert cube manifolds (satisfying Conditions (\ref{Char1}), (\ref{Char3}) and (\ref{Char4})) and Davis's manifolds are both pseudo-collarable and $\mathcal{Z}$-compactifiable (see \cite{Fis03} or \cite{AS85}) but not completable. Nevertheless, the relationship between pseudo-collarable manifolds and $\mathcal{Z}$-compactifiable manifolds are not well-understood. Here we answer the following open question in negative.

\begin{question}\label{Question: Z-compact implies pseudo}\cite{Gu20}
Are $\mathcal{Z}$-compactifiable manifolds pseudo-collarable?
\end{question}

\begin{remark}
It is worth noting that Question \ref{Question: Z-compact implies pseudo} is related to an older open question posed in \cite{GT03} asking if
a $\mathcal{Z}$-compactifiable open manifold can fail to be pseudo-collarable. However, Question \ref{Question: Z-compact implies pseudo} is formulated in a more general setting with the manifolds with non-empty boundary taken into account, which is essential to our examples.
\end{remark}
The following theorem is the main result of this paper.
\begin{theorem}\label{Th: Z-compact not pseudo}
There exist $\mathcal{Z}$-compactifiable manifold $M^m$ $(m\geq 4)$ with noncompact boundary which fail to be pseudo-collarable.
\end{theorem}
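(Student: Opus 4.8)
The plan is to leverage the necessity half of \thmref{Th: Characterization Theorem}. It is known that a $\mathcal{Z}$-compactifiable manifold is inward tame and has vanishing Wall obstruction at infinity, so conditions (\ref{condition a}) and (\ref{condition c}) hold automatically for any $\mathcal{Z}$-compactifiable $M^{m}$; the only way such a manifold can fail to be pseudo-collarable is therefore through condition (\ref{condition b}). Accordingly I would build $M^{m}$ whose peripheral pro-$\pi_{1}$ at infinity is semistable but \emph{not} perfectly semistable, so that the forward (necessity) direction of \thmref{Th: Characterization Theorem} — which does not require the $m\ge 6$ hypothesis used only for sufficiency — forbids pseudo-collarability. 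Note that I cannot obtain the $\mathcal{Z}$-compactification for free by merely checking Conditions (\ref{Char1}), (\ref{Char3}) and (\ref{Char4}) of \thmref{Th: Completion Theorem}, since their sufficiency is exactly the open Question~\ref{big question}; the compactification must be produced by hand.

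The group-theoretic engine is the class of hypoabelian groups — those whose transfinite derived series reaches the trivial group, equivalently those with trivial perfect radical $P(G)$. By iterated \emph{twisted Whitehead doubling} of a fixed fibered knot I would produce an inverse sequence of knot groups $G_{1}\xleftarrow{\lambda_{1}}G_{2}\xleftarrow{\lambda_{2}}\cdots$ in which each $G_{i}$ is hypoabelian, each bonding map $\lambda_{i}$ is surjective (so the tower is semistable), and each $\ker\lambda_{i}$ is nontrivial; the satellite relation of the double is precisely what forces a meridian to die and hence the kernel to be nontrivial. The obstruction to perfect semistability is then clean. Given any surjective pro-isomorphic representative with perfect kernels, an extension of a perfect group by a perfect group is again perfect, so the preimage of the perfect radical equals the perfect radical; thus the bonds become isomorphisms on the hypoabelianizations $G_{i}/P(G_{i})$, forcing that tower to be stable. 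Because $P(G_{i})=1$ this would make $\{G_{i}\}$ itself stable, contradicting $\ker\lambda_{i}\neq 1$. Hence the tower is semistable but not perfectly semistable, and realizing it as the peripheral $\pi_{1}$-system of an end destroys condition (\ref{condition b}).

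Next I would realize $\{G_{i},\lambda_{i}\}$ geometrically as the peripheral fundamental-group tower of a one-ended $m$-manifold $M^{m}$ $(m\ge 4)$ with noncompact boundary, using the mapping-torus structure of the fibered knot complements to assemble nested clean neighborhoods of infinity $N_{1}\supseteq N_{2}\supseteq\cdots$ with $\pi_{1}(N_{i})\cong G_{i}$ realizing the bonds $\lambda_{i}$. The fibered hypothesis and a judicious choice of twisting are used to trivialize the Whitehead torsion at infinity, which is what permits the end to be capped off. The noncompact boundary is essential: I would arrange an explicit compactification $\widehat{M}=M\sqcup Z$ in which the $\mathcal{Z}$-set $Z$ lies along the boundary direction, and then verify the defining property that $U\setminus Z\hookrightarrow U$ is a homotopy equivalence for every open $U\subseteq\widehat{M}$ by means of strong deformation retractions built from the collar of $\partial M$ together with the product structure of the fibered blocks. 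In the compact-boundary setting these retractions are unavailable, which is exactly why the example requires noncompact boundary and why it lies beyond the reach of the older open problem of \cite{GT03}.

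The hard part will be reconciling the two requirements, since a $\mathcal{Z}$-compactification imposes stringent homotopy control near infinity that naively conflicts with the failure of perfect semistability. The technical heart is thus verifying the $\mathcal{Z}$-set condition for \emph{all} open $U$ — not merely that $\widehat{M}$ is some compactification — while simultaneously keeping the interior neighborhoods of infinity hypoabelian with nontrivial bonding kernels. I expect this to turn on a careful calibration of the twisting parameters (to make $Z$ exactly a $\mathcal{Z}$-set without inadvertently forcing the $\lambda_{i}$ to become isomorphisms, which would restore stability) and on the fibered structure, which localizes the homotopy equivalences $U\setminus Z\simeq U$ block by block. Once this is in place, \thmref{Th: Characterization Theorem} delivers non-pseudo-collarability at once, completing the construction in every dimension $m\ge 4$.
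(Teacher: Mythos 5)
Your group-theoretic half is essentially the paper's: iterated twisted Whitehead doubles (together with connected sums) of a fibered knot produce an inverse sequence of hypoabelian knot groups with surjective, non-injective bonds; Lemma~\ref{Lemma:hypoabelian groups} then rules out perfect semistability, and the necessity direction of Theorem~\ref{Th: Characterization Theorem} rules out pseudo-collarability. You also correctly note that $\mathcal{Z}$-compactifiability forces conditions (\ref{condition a}) and (\ref{condition c}), so condition (\ref{condition b}) is the only possible point of failure.

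The genuine gap is in the $\mathcal{Z}$-compactification half, which you yourself flag as ``the hard part'' and then leave as a plan rather than a proof. The paper's resolution is a single decisive trick that your proposal never finds: realize the knot-group tower as the pro-$\pi_1$ at infinity of a \emph{contractible} open $3$-manifold $W^3$ (Bing's nested-solid-tori construction), and set $M^4 = W^3\times[0,1)$. For a contractible open $M$, the \emph{one-point} compactification of $M\times[0,1)$ is already a $\mathcal{Z}$-compactification (Proposition~\ref{Prop: contractible is Z-compact}, verified via the Bestvina--Mess criterion of Lemma~\ref{Lemma: Z compactification criterion}: neighborhoods of infinity of the form $(M\setminus C)\times[0,1)\cup M\times[a,1)$ deformation retract to a slice $M\times\{b\}$ and then contract). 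So the ``stringent homotopy control near infinity'' that you worry about comes for free from contractibility; no calibration of twisting parameters, no Whitehead-torsion bookkeeping, and no block-by-block deformation retractions are needed. Indeed your proposed mechanism misattributes the role of the twisting: in the paper the twist exists purely for the group theory (a nontrivial Alexander polynomial keeps the doubled knot group hypoabelian, whereas the untwisted double has perfect commutator subgroup), not to control the $\mathcal{Z}$-set condition. Without the contractibility observation your construction of $\widehat{M}$ remains unverified, and the cases $m>4$ (handled in the paper by crossing with $S^{m-4}$, which preserves the one-point $\mathcal{Z}$-compactification and leaves the $\pi_1$-tower unchanged, or merely multiplies it by $\mathbb{Z}$ when $m=5$) are not addressed.
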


About the organization of this paper: In Sections \ref{Section: Terminology}-\ref{Section: Peripheral stability}, we recapitulate definitions
and results from earlier papers \cite{GG20} and \cite{Gu20} which
provide preliminaries about neighborhoods of infinity, ends and peripheral perfect semistability condition. The main content is \S \ref{Section: Main content}, where we prove Theorem \ref{Th: Z-compact not pseudo}. The starting point of our 4-dimensional examples is based on the construction of a contractible open 3-manifold $W^3$ which cannot be embedded in any compact 3-manifolds, see  \cite{Gu21} and \cite{KM62}. After introducing $W^3$, we use the interaction of knot theory and combinatorial group theory to show that $W^3 \times [0,1)$ is a desired example. More specifically, in \S \ref{subsection: Z-compactification of W x [0,1)}, we prove that the space $W^3 \times [0,1)$ is $\mathcal{Z}$-compactifiable. In \S \ref{subsection: pi_1 at infinity of W}, we show that $W^3 \times [0,1)$ does not satisfy Condition (\ref{condition b}) of Theorem \ref{Th: Characterization Theorem}. Finally, in \S \ref{subsection: High-dimensional examples}, we produce examples of dimension greater than four, using the structure of $W^3 \times [0,1)$.

\section{Conventions and notations \label{Section: Terminology}}
Manifolds are assumed to be PL. Equivalent results in the DIFF and TOP categories may be obtained in the usual ways. Unless stated
otherwise, an $m$-manifold $M^{m}$ is permitted to have a boundary, denoted
$\partial M^{m}$. We denote the \emph{manifold interior} by
$\operatorname*{int}M^{m}$. For $A\subseteq M^{m}$, the \emph{point-set
interior} will be denoted $\operatorname*{Int}_{M^{m}}A$ and the
\emph{frontier} by $\operatorname{Fr}_{M^{m}}A$. A \emph{closed manifold} is a compact
boundaryless manifold, while an \emph{open manifold} is a non-compact
boundaryless manifold.

A $q$-dimensional submanifold $Q^{q}\subseteq M^{m}$ is
\emph{properly embedded} if it is a closed subset of $M^{m}$ and $Q^{q}%
\cap\partial M^{m}=\partial Q^{q}$; it is \emph{locally flat} if each
$p\in\operatorname*{int}Q^{q}$ has a neighborhood pair homeomorphic to
$\left(
\mathbb{R}
^{m},%
\mathbb{R}
^{q}\right)  $ and each $p\in\partial Q^{q}$ has a neighborhood pair
homeomorphic to $\left(
\mathbb{R}
_{+}^{m},%
\mathbb{R}
_{+}^{q}\right)  $. By this definition, the only properly embedded codimension
0 submanifolds of $M^{m}$ are unions of its connected components; a more
useful variety of codimension 0 submanifolds are the following: a codimension
0 submanifold $Q^{m}\subseteq M^{m}$ is \emph{clean} if it is a closed subset
of $M^{m}$ and $\operatorname{Fr}_{M}Q^{m}$ is a properly embedded locally flat
(hence, bicollared) $\left(  m-1\right)  $-submanifold of $M^{m}$. In that
case, $\overline{M^{m}\backslash Q^{m}}$ is also clean, and $\operatorname{Fr}_{M^m}Q^{m}$ is a clean 
codimension 0 submanifold of both $\partial Q^{m}$ and $\partial(\overline{M^{m}\backslash Q^{m}})$.

When the dimension of a manifold or submanifold is clear, we will often omit
the superscript; for example, denoting a clean codimension 0 submanifold
simply by $Q$. Similarly, when the ambient space is clear, we denote
(point-set) interiors and frontiers by $\operatorname*{Int}A$ and $\operatorname{Fr} A$.

For any codimension 0 clean submanifold $Q\subseteq M^{m}$, let $\partial
_{M}Q$ denote $Q\cap\partial M^{m}$ and $\operatorname*{int}_{M}%
Q=Q\cap\operatorname*{int}M^{m}$; alternatively, $\partial_{M}Q=\partial
Q\backslash\operatorname{int}(\operatorname{Fr} Q)$ and $\operatorname*{int}_{M}Q=Q\backslash\partial M^{m}$.
Note that $\operatorname*{int}_{M}Q$ is a $m$-manifold and $\partial\left(
\operatorname*{int}_{M}Q\right)  =\operatorname*{int}\left( \operatorname{Fr} Q\right)
$.

A metric space $Y$ is said to be an \emph{absolute neighborhood retract} (abbreviated as ANR) if, for
each closed subset $A$ of a metric space $X$, every map $f: A \to Y$ has a continuous extension $F: U \to Y$
defined on some neighborhood $U$ of $A$ in $X$. Every topological manifold is an ANR \cite[III: 8.3]{Hu65}.

\section{Ends, $\operatorname*{pro}$-$\pi_{1}$, and the peripherally perfectly semistable
condition}\label{Section: Peripheral stability}

We recall the definition of the peripherally perfectly semistability developed in \cite{Gu20}, which are essential for this paper. Since all the examples we shall construct in \S \ref{Section: Main content} satisfy the inward tameness (Condition (\ref{condition a})) and the Wall finiteness obstruction (Condition (\ref{condition c})), we omit the illustrations of those conditions. Readers are referred to \cite{Gu20} or \cite{GG20} for details.
\subsection{Neighborhoods of infinity, partial neighborhoods of infinity, and ends.} Let $M^{m}$ be a connected manifold. A \emph{clean neighborhood of infinity}
in $M^{m}$ is a clean codimension 0 submanifold $N\subseteq M^{m}$ for which
$\overline{M^{m}\backslash N}$ is compact. Equivalently, a clean neighborhood
of infinity is a set of the form $\overline{M^{m}\backslash C}$ where $C$ is a
compact clean codimension 0 submanifold of $M^{m}$. A \emph{clean compact exhaustion} of
$M^{m}$ is a sequence $\left\{  C_{i}\right\}  _{i=1}^{\infty}$ of clean
compact connected codimension 0 submanifolds with $C_{i}\subseteq
\operatorname*{Int}_{M^{m}}C_{i+1}$ and $\cup C_{i}=M^{m}$. By letting
$N_{i}=\overline{M^{m}\backslash C_{i}}$ we obtain the corresponding
\emph{cofinal sequence of clean neighborhoods of infinity}. Each such $N_{i}$
has finitely many components $\left\{  N_{i}^{j}\right\}  _{j=1}^{k_{i}}$. By
enlarging $C_{i}$ to include all of the compact components of $N_{i}$ we can
arrange that each $N_{i}^{j}$ is noncompact; then, by drilling out regular
neighborhoods of arcs connecting the various components of each $\operatorname{Fr}
_{M^{m}}N_{i}^{j}$ (thereby further enlarginging $C_{i}$), we can
arrange that each $\operatorname{Fr}_{M^{m}}N_{i}^{j}$ is connected. An $N_{i}$ with
these latter two properties is called a $0$-neighborhood of infinity. For
convenience, most constructions in this paper will begin with a clean compact
exhaustion of $M^{m}$ with a corresponding cofinal sequence of clean
0-neighborhoods of infinity.

Assuming the above arrangement, we let $N_{1}^{j,1}=N_{1}^{j}$ and $N_{i+1}^{j,i+1}= N_{i+1}^{l}\subseteq N_{i}^{j,i}$, where $l \in \{1,\dots, k_{i+1}\}$. An \emph{end }of $M^{m}$ is determined by a
nested sequence of components $\varepsilon=\left(  N_{i}^{j,i}\right)
_{i=1}^{\infty}$ of the $N_{i}$; each component is called a \emph{neighborhood
of }$\varepsilon$. In $\S 3.3$, we discuss components $\{N^j\}$ of a neighborhood of infinity
$N$ without reference to a specific end of $M^m$. In that situation, we will refer to the
$N^j$ as \emph{partial neighborhoods of infinity} for $M^m$ (\emph{partial $0$-neighborhoods} if $N$ is a
0-neighborhood of infinity). Clearly every noncompact clean connected codimension
0 submanifold of $M^m$ with compact frontier is a partial neighborhood of infinity with respect to an appropriately chosen compact $C$; if its frontier is connected it is a partial 0-neighborhood of infinity.

\subsection{The fundamental group of an end} For each end $\varepsilon$, we will define the \emph{fundamental group at}
$\varepsilon$; this is done using inverse sequences. Two inverse sequences of
groups and homomorphisms $A_{0}\overset{\alpha_{1}}{\longleftarrow}%
A_{1}\overset{\alpha_{2}}{\longleftarrow}A_{3}\overset{\alpha_{3}%
}{\longleftarrow}\cdots$ and $B_{0}\overset{\beta_{1}}{\longleftarrow}%
B_{1}\overset{_{\beta_{2}}}{\longleftarrow}B_{3}\overset{\beta_{3}%
}{\longleftarrow}\cdots$ are \emph{pro-isomorphic} if they contain
subsequences that fit into a commutative diagram of the form
\begin{equation}
\begin{diagram} A_{i_{0}} & & \lTo^{\lambda_{i_{0}+1,i_{1}}} & & A_{i_{1}} & & \lTo^{\lambda_{i_{1}+1,i_{2}}} & & A_{i_{2}} & & \lTo^{\lambda_{i_{2}+1,i_{3}}}& & A_{i_{3}}& \cdots\\ & \luTo & & \ldTo & & \luTo & & \ldTo & & \luTo & & \ldTo &\\ & & B_{j_{0}} & & \lTo^{\mu_{j_{0}+1,j_{1}}} & & B_{j_{1}} & & \lTo^{\mu_{j_{1}+1,j_{2}}}& & B_{j_{2}} & & \lTo^{\mu_{j_{2}+1,j_{3}}} & & \cdots \end{diagram} \label{basic ladder diagram}%
\end{equation}
An inverse sequence is \emph{stable }if it is pro-isomorphic to a constant
sequence $C\overset{\operatorname*{id}}{\longleftarrow}%
C\overset{\operatorname*{id}}{\longleftarrow}C\overset{\operatorname*{id}%
}{\longleftarrow}\cdots$. Clearly, an inverse sequence is pro-isomorphic to
each of its subsequences; it is stable if and only if it contains a
subsequence for which the images stabilize in the following manner
\begin{equation}
\begin{diagram} A_{0}& & \lTo^{{\lambda}_{1}} & & A_{1} & & \lTo^{{\lambda}_{2}} & & A_{2} & & \lTo^{{\lambda}_{3}} & & A_{3} &\cdots\\ & \luTo & & \ldTo & & \luTo & & \ldTo & & \luTo & & \ldTo & \\ & & \operatorname{Im}\left( \lambda_{1}\right) & & \lTo^{\cong} & & \operatorname{Im}\left( \lambda _{2}\right) & &\lTo^{\cong} & & \operatorname{Im}\left( \lambda_{3}\right) & & \lTo^{\cong} & &\cdots & \\ \end{diagram} \label{Standard stability ladder}%
\end{equation}
where all unlabeled homomorphisms are restrictions or inclusions.

Given an end $\varepsilon=\left(  N_{i}^{k_{i}}\right)  _{i=1}^{\infty}$,
choose a ray $r:[1,\infty)\rightarrow M^{m}$ such that $r\left(
[i,\infty)\right)  \subseteq N_{i}^{k_{i}}$ for each integer $i>0$ and form
the inverse sequence
\begin{equation}
\pi_{1}\left(  N_{1}^{k_{1}},r\left(  1\right)  \right)  \overset{\lambda
_{2}}{\longleftarrow}\pi_{1}\left(  N_{2}^{k_{2}},r\left(  2\right)  \right)
\overset{\lambda_{3}}{\longleftarrow}\pi_{1}\left(  N_{3}^{k_{3}},r\left(
3\right)  \right)  \overset{\lambda_{4}}{\longleftarrow}\cdots
\label{sequence: pro-pi1}%
\end{equation}
where each $\lambda_{i}$ is an inclusion induced homomorphism composed with
the change-of-basepoint isomorphism induced by the path $\left.  r\right\vert
_{\left[  i-1,i\right]  }$. We refer to $r$ as the \emph{base ray} and the
sequence (\ref{sequence: pro-pi1}) as a representative of the
\textquotedblleft fundamental group at $\varepsilon$ based at $r$%
\textquotedblright\ ---denoted $\operatorname*{pro}$-$\pi_{1}\left(
\varepsilon,r\right)  $. We say \emph{the fundamental
group at }$\varepsilon$\emph{ is stable} if (\ref{sequence: pro-pi1}) is a
stable sequence. 

If $\{H_i,\mu_i\}$ can be chosen so that each $\mu_i$ is an epimorphism,
we say that our inverse sequence is \emph{semistable} (or \emph{Mittag-Leffler}, or \emph{pro-epimorphic}).
In this case, it can be arranged that the restriction maps in the bottom row of (\ref{basic ladder diagram}) are
epimorphisms. Similarly, if $\{H_i,\mu_i\}$ can be chosen so that each $\mu_i$ is a monomorphism,
we say that our inverse sequence is \emph{pro-monomorphic}; it can then be arranged that the
restriction maps in the bottom row of (\ref{basic ladder diagram}) are monomorphisms. It is easy to see that an
inverse sequence that is semistable and pro-monomorphic is stable.  A nontrivial (but standard) observation is that both semistability and stability of $\varepsilon$ do
not depend on the base ray (or the system of neighborhoods if infinity used to
define it). See \cite{Gui16} or \cite{Geo08}.

Recall that a \emph{commutator} element of a group $H$ is an element of the form $x^{-1}y^{-1}xy$
where $x,y \in H$; and the \emph{commutator subgroup} of $H$; denoted $[H,H]$ or $H^{(1)}$, is the subgroup generated by all of its commutators. The group $H$ is \emph{perfect} if $H = [H,H]$. An inverse sequence of groups is \emph{perfectly semistable} if it is pro-isomorphic to an inverse
sequence.

\begin{equation}
G_0 \xtwoheadleftarrow{\lambda_1} G_1 \xtwoheadleftarrow{\lambda_2}G_2 \xtwoheadleftarrow{\lambda_3}\cdots 
\end{equation}
of finitely generated groups and surjections where each $\ker(\lambda_i)$ perfect. 

\subsection{Relative connectedness, relatively perfect semistability, and the peripheral perfect semistability condition.}

Let $Q$ be a manifold and $A\subseteq\partial Q$. We say that $Q$ is\emph{
}$A$-\emph{connected at infinity} if $Q$
contains arbitrarily small neighborhoods of infinity $V$ for which $A\cup V$
is connected.

If $A\subseteq\partial Q$ and $Q$ is $A$-connected at infinity: let $\left\{  V_{i}\right\}  $ be a cofinal sequence
of clean neighborhoods of infinity for which each $A\cup V_{i}$ is connected;
choose a ray $r:[1,\infty)\rightarrow\operatorname*{Int}Q$ such that $r\left(
[i,\infty)\right)  \subseteq V_{i}$ for each $i>0$; and form the inverse
sequence%
\begin{equation}
\pi_{1}\left(  A\cup V_{1},r\left(  1\right)  \right)  \overset{\mu
_{2}}{\longleftarrow}\pi_{1}\left(  A\cup V_{2},r\left(  2\right)  \right)
\overset{\mu_{3}}{\longleftarrow}\pi_{1}\left(  A\cup V_{3},r\left(  3\right)
\right)  \overset{\mu_{4}}{\longleftarrow}\cdots
\label{sequence: rel A pro-pi1}%
\end{equation}
where bonding homomorphisms are obtained as in (\ref{sequence: pro-pi1}). We
say $Q$ is $A$-\emph{perfectly $\pi_1$-semistable at infinity } (resp. $A$-$\pi_{1}%
$\emph{-stable at infinity}) if (\ref{sequence: rel A pro-pi1}) is perfectly semistable (resp. stable).
Independence of this property from the choices of $\left\{  V_{i}\right\}  $
and $r$ follows from the traditional theory of ends by applying \cite[Lemmas 4.1 \& 4.2]{GG20}. 

\begin{definition} Let $M^m$ be an manifold and $\varepsilon$ be an end of $M^m$
\begin{enumerate}
\item \label{condition1} $M^m$ is \emph{peripherally locally connected at infinity} if it contains arbitrarily small
$0$-neighborhoods of infinity $N$ with the property that each component $N^j$ is $\partial_M N^j$-connected at infinity.
\item $M^m$ \label{condition2} is \emph{peripherally locally connected at} $\varepsilon$ if $\varepsilon$ has arbitrarily small $0$-neighbor-hoods $P$ that are $\partial_M P$-connected at infinity.
\end{enumerate} 
\end{definition}

An $N$ with the property described in Condition (\ref{condition1}) above will be called a \emph{strong $0$-neighborhood of infinity} for $M^m$, and a $P$ with the property described in Condition (\ref{condition2})
will be called a \emph{strong $0$-neighborhood of $\varepsilon$}. More generally, any connected partial
0-neighborhood of infinity $Q$ that is $\partial_M Q$-connected at infinity will be called a \emph{strong
partial $0$-neighborhood of infinity}.

Since every inward tame manifold $M^{m}$ is peripherally locally connected at infinity \cite[Cor. 5.5]{GG20}, that condition plays less prominent role than the next definition.

\begin{definition}Let $M^m$ be a manifold and $\varepsilon$ an end of $M^m$.
\begin{enumerate}
\item $M^m$ is \emph{peripherally perfectly $\pi_1$-semistable at infinity} if it contains arbitrarily small strong
0-neighborhoods of infinity $N$ with the property that each component $N^j$ is $\partial_M N^j$-perfectly $\pi_1$-semistable at infinity.

\item $M^m$ is \emph{peripherally perfectly $\pi_1$-semistable at $\varepsilon$} if $\varepsilon$ has arbitrarily small strong $0$-neighborhoods $P$ that are $\partial_M P$-perfectly $\pi_1$-semistable at infinity.
\end{enumerate}
 \end{definition}

\begin{remark}
If $M^m$ contains arbitrarily small 0-neighborhoods of infinity $N$ with
the property that each component $N^j$ is $\partial_M N^j$-perfectly semistable at infinity, then those components provide arbitrarily small neighborhoods of the ends satisfying the necessary perfectly semistable condition. Thus, it's easy to see peripheral perfect $\pi_1$-semistability at infinity implies peripheral perfect $\pi_1$-semistability at each end.
\end{remark}

A locally finite polyhedron $P$ is \emph{inward tame} if it contains
arbitrarily small polyhedral neighborhoods of infinity that are finitely
dominated. 


\begin{lemma}\label{Lemma: inward tameness implies semistable}
Let $M^{m}$ be inward tame and peripherally locally connected at infinity. Then every strong
partial $0$-neighborhood of infinity $Q\subseteq M^{m}$ is $\partial_M Q$-$\pi_1$-semistable.
\end{lemma}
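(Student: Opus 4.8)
The plan is to show that the relative inverse sequence (\ref{sequence: rel A pro-pi1}) attached to $Q$ and $A=\partial_M Q$ is semistable, i.e.\ pro-epimorphic. Because $Q$ is a strong partial $0$-neighborhood of infinity it is $A$-connected at infinity, so I may fix a cofinal sequence $\{V_i\}$ of clean neighborhoods of infinity in $Q$ with each $A\cup V_i$ connected, together with a base ray $r$; write $W_i=A\cup V_i$. By \cite[Lemmas 4.1 \& 4.2]{GG20} the property in question does not depend on $\{V_i\}$ or $r$, so I am free to replace $\{V_i\}$ by any cofinal subsequence with convenient extra features. Since semistability of a tower is equivalent to the Mittag--Leffler condition, the goal reduces to proving that for each $i$ the descending chain of subgroups $\operatorname{im}\!\big(\pi_1(W_{i+k},r(i))\to\pi_1(W_i,r(i))\big)$, $k\ge 1$, is eventually constant. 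A useful first observation is that $A$ is noncompact, runs out to infinity, and is contained in every $W_j$; hence $\operatorname{im}\!\big(\pi_1(A)\to\pi_1(W_i)\big)$ lies inside every term of this chain, so the chain is squeezed between a fixed subgroup and $\pi_1(W_i)$ and the only real issue is to prevent it from strictly decreasing forever.

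Next I would harvest the homotopies provided by inward tameness. Every clean neighborhood of infinity of $M^m$ is finitely dominated, and a retraction argument shows finite domination passes to components; since inward tameness of $M^m$ is inherited by $Q$ (a small neighborhood of infinity of $Q$ coincides near the relevant end with a component of a neighborhood of infinity of $M^m$), each $V_i$ may be taken finitely dominated. Finite domination of $V_i$ is equivalent to the existence of a homotopy $H_i\colon V_i\times[0,1]\to V_i$ with $H_i^{0}=\operatorname{id}_{V_i}$ and $H_i^{1}(V_i)$ contained in a compact set $K_i\subseteq V_i$; after passing to a subsequence I may also arrange $K_i\cap V_{i+1}=\varnothing$. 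Moreover, peripheral local connectedness of $M^m$—which by \cite[Cor. 5.5]{GG20} is automatic from inward tameness—lets me take the $V_i$ to be strong, so that each $W_i$ is connected and there is enough room to slide arcs along $A$ at every level.

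The heart of the proof, and the step I expect to be the main obstacle, is to convert these inward contractions into the required stabilization of images. The plan is to represent a class in $\operatorname{im}(\pi_1(W_{i+k})\to\pi_1(W_i))$ by a loop in $W_{i+k}$, cut it into arcs lying in $V_{i+k}$ and arcs lying in $A$, leave the $A$-arcs untouched (they already persist at every level), push the $V$-arcs toward the compact core by $H$, and then use that $Q$ is $A$-connected at infinity to reroute the resulting loop along $A$, keeping its transition points on $A$, so that the homotopy class it determines in $\pi_1(W_i)$ is seen to stabilize as $k$ grows. The genuine difficulty is directional: inward tameness only supplies homotopies that contract a neighborhood of infinity toward a compact subset, whereas pro-epimorphism is a statement about the images of the fundamental groups of deep neighborhoods, and the two are reconciled only by exploiting the persistent noncompact boundary $A$ as a conduit to infinity. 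This is exactly why the hypothesis that $M^m$ be peripherally locally connected at infinity—equivalently, that $Q$ be a strong partial $0$-neighborhood—is indispensable here, and it is the feature that makes inward tameness alone suffice for semistability in this relative setting. The remaining care is bookkeeping: the contractions and slides must be performed compatibly with the base ray $r$ so that the bonding homomorphisms themselves, and not merely their images up to conjugacy, are controlled, which is where \cite[Lemmas 4.1 \& 4.2]{GG20} enter a second time.
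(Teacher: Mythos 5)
Your write-up sets up the right framework (the relative tower $\pi_1(A\cup V_i)$, reduction of semistability to the Mittag--Leffler condition, independence of choices via \cite[Lemmas 4.1 \& 4.2]{GG20}), but the central step is missing rather than proved. The observation that each image $\operatorname{im}\bigl(\pi_1(W_{i+k})\to\pi_1(W_i)\bigr)$ contains the fixed subgroup $\operatorname{im}\bigl(\pi_1(A)\to\pi_1(W_i)\bigr)$ buys nothing: a descending chain of subgroups bounded below by a fixed subgroup can still strictly decrease forever, so ``squeezing'' does not address the only issue you correctly identify. The passage you label ``the heart of the proof''---cutting loops into $A$-arcs and $V$-arcs, pushing the latter inward by the finite-domination homotopies, and rerouting along $A$---is announced as a plan and flagged as ``the main obstacle,'' but no argument is given for why this procedure forces the images to stabilize. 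That implication (inward tameness $\Rightarrow$ semistable pro-$\pi_1$) is precisely the nontrivial content of the lemma; it does not follow formally from the existence of homotopies into compacta, and the known proofs use genuinely manifold-theoretic input (the fact that degree one maps between manifolds induce $\pi_1$-surjections, or the handle/general-position argument of \cite[Prop.\ 3.2]{GT03}). Nothing of that sort appears in your proposal.

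For comparison, the paper's proof is a two-line reduction: by \cite[Lemma 5.3]{GG20} the open manifold $Q\setminus\partial M^m$ is inward tame; by \cite[Lemma 4.2]{GG20} the $\partial_M Q$-relative semistability of $Q$ is equivalent to ordinary semistability of $\operatorname{pro}$-$\pi_1$ of $Q\setminus\partial M^m$; and the latter is \cite[Prop.\ 3.2]{GT03} (whose proof rests on the degree-one-map fact). If you want a self-contained argument along your lines, you would need to supply that last ingredient explicitly---e.g., show that the inward homotopy restricted to the compact frontier of a deep neighborhood of infinity produces, by a duality/degree argument, a surjection onto the relevant image subgroup---rather than leaving it as a described intention. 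Also, the role you assign to the ``strong'' hypothesis is overstated: the $\partial_M Q$-connectedness is what makes the relative tower well defined and permits the reduction to the boundaryless case; it is not by itself the mechanism converting inward tameness into pro-surjectivity.
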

\begin{proof}
By \cite[Lemma 5.3]{GG20}, $Q\backslash \partial M^m$ is inward tame. Using \cite[Lemma 4.2]{GG20}, proving $Q\subseteq M^{m}$ is $\partial_M Q$-$\pi_1$-semistable is 
equivalent to showing that $Q\backslash \partial M^m$ has semistable pro-$\pi_1$. The latter follows from a slight modification
of \cite[Prop. 3.2]{GT03} (or a fact that "degree 1 maps between manifolds induce
surjections on fundamental groups" as suggested in \cite[Rmk. 4]{GT03}).
\end{proof}

\section{$\mathcal{Z}$-compactifiable but not pseudo-collarable manifolds with boundary}\label{Section: Main content}
We shall prove Theorem \ref{Th: Z-compact not pseudo} by focusing on the fundamental group at infinity of certain contractible manifolds with noncompact boundary.

\subsection{$\mathcal{Z}$-compactification of $W^3 \times [0,1)$}\label{subsection: Z-compactification of W x [0,1)}
Before constructing the contractible open $3$-manifold $W^3$, we first give a general result to guarantee that the product of $W^3$ with $[0,1)$ is $\mathcal{Z}$-compactifiable.

\begin{proposition} \label{Prop: contractible is Z-compact}
Let $M$ be a contractible open manifold. Then $M \times [0,1)$ is $\mathcal{Z}$-compactifiable.
\end{proposition}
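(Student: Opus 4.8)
The plan is to prove that $M \times [0,1)$ is $\mathcal{Z}$-compactifiable by directly constructing the compactification. The key observation is that $M$ is contractible, and we are thickening it by a half-open interval, which gives us substantial freedom. Since $M$ is a contractible open manifold, a natural first move is to realize $M$ as the interior of something, or to use the structure of $M \times [0,1)$ as a "product with a collar direction" that we can exploit. The goal is to find a compactum $Z$ and a compactification $\widehat{X} = (M \times [0,1)) \sqcup Z$ so that for every open $U \subseteq \widehat{X}$, the inclusion $U \setminus Z \hookrightarrow U$ is a homotopy equivalence.

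Let me think about how to build the compactification concretely.

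**First I would** try to compactify using the one-point-at-infinity structure of $M$ combined with the interval direction. The cleanest approach: since $M$ is a contractible open $n$-manifold, consider the open cone or, better, observe that $M \times [0,1)$ can be compactified by adding the cone on the "end" together with the top face. Concretely, I would attempt to write $M \times [0,1)$ as an open subset of a cone-like object $cM = (M \times [0,1])/(M \times \{1\})$ type construction, but since $M$ itself is noncompact this needs care. The more robust strategy is to invoke the general principle (going back to the Hilbert cube manifold setting of Chapman--Siebenmann and its manifold analogues) that a space is $\mathcal{Z}$-compactifiable once one produces a compactification whose "boundary" $Z$ is a $\mathcal{Z}$-set, meaning it is homotopically negligible and satisfies the required local homotopy-equivalence condition. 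Because $M$ is contractible, $M \times [0,1)$ is contractible, so the total space $\widehat{X}$ should be contractible as well, which makes verifying the global homotopy equivalence $\widehat{X} \setminus Z \hookrightarrow \widehat{X}$ essentially automatic; the work is entirely in the \emph{local} condition for arbitrary open $U$.

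**The heart of the argument** would be to choose $Z$ so that it is a $\mathcal{Z}$-set, and the most natural candidate is to take $Z$ to incorporate the "$[0,1)$ end" collapsed appropriately: think of $[0,1)$ compactified to $[0,1]$, contributing a copy of $M$'s compactification at the level $1$, together with whatever is needed at the end of $M$. Because the extra interval factor provides a free collar, one expects the added set to satisfy the $\mathcal{Z}$-set condition by a standard "pushing off the boundary" homotopy: any map of a finite polyhedron into $U$ can be homotoped off $Z$ by sliding inward along the $[0,1)$ direction, and this slide is small and supported near $Z$, giving the homotopy equivalence $U \setminus Z \hookrightarrow U$ for every open $U$. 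The contractibility of $M$ guarantees there are no $\pi_1$ or higher homotopy obstructions to performing these slides coherently across all of $Z$ at once.

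**The main obstacle** I anticipate is not the global homotopy type — contractibility trivializes that — but verifying the $\mathcal{Z}$-set condition \emph{locally and uniformly}, i.e. producing, for every open $U \subseteq \widehat{X}$, a homotopy from $U$ into $U \setminus Z$ that is controlled near $Z$. The interval factor $[0,1)$ is precisely the device that overcomes this: it furnishes an explicit instantaneous push-in map $H_t(x,s) = (x, (1-t)s + t \cdot \varphi(s))$ (schematically) that retracts a neighborhood of $Z$ into the complement of $Z$ while fixing the rest, and one must check this extends continuously over the added points of $Z$ and respects the topology of the chosen compactification. I would therefore spend most of the effort pinning down the precise compactifying topology on $\widehat{X} = (M \times [0,1)) \sqcup Z$ and confirming that the collar-slide homotopy is continuous up to and along $Z$; once that continuity is established, the $\mathcal{Z}$-compactification property follows for all open $U$ by the standard restriction argument.
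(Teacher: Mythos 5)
Your overall strategy---compactify, then verify that the added set is a $\mathcal{Z}$-set using the $[0,1)$ factor as a collar to push off---has the right flavor, but there is a genuine gap: you never actually specify the compactification, and the one you gesture at (a product-like object containing ``a copy of $M$'s compactification at the level $1$, together with whatever is needed at the end of $M$'') cannot be made to work in general. A contractible open manifold need not admit any tame compactification of its own end---indeed the $W^3$ used later in this paper embeds in no compact $3$-manifold---so ``whatever is needed at the end of $M$'' is precisely the difficulty, not a detail to be deferred. Moreover, your push-off mechanism only slides in the $[0,1)$ direction; it moves points off the part of $Z$ lying over $M\times\{1\}$ but does nothing for points of $Z$ sitting over the end of $M$ at interior levels $s<1$, and verifying the local homotopy condition at those points would require control of the end of $M$ that is unavailable for the Whitehead-type manifolds at issue (their pro-$\pi_1$ at infinity is not even stable). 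Finally, the ``instantaneous push-off'' formulation you invoke presupposes that the compactified space is an ANR, which for an unspecified $Z$ is not automatic.

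The paper sidesteps all of this by taking $Z$ to be a single point: it shows the one-point compactification $M\times[0,1)\sqcup\{\infty\}$ is a $\mathcal{Z}$-compactification via the Bestvina--Mess criterion (Lemma \ref{Lemma: Z compactification criterion}), which simultaneously delivers the ANR property of the compactified space. With $Z=\{\infty\}$ the only nontrivial condition is local $k$-connectedness at $\infty$ for all $k$, i.e.\ that $M\times[0,1)$ has arbitrarily small neighborhoods of infinity that contract in themselves. Given a neighborhood $U$ of infinity, one takes $V=M\times[0,1)\setminus(C\times[0,a])$ for suitable compact $C\subseteq M$ and $a\in(0,1)$; then $V$ deformation retracts onto the slice $M\times\{b\}$, $b\in[a,1)$, by sliding the second coordinate, and that slice contracts in itself because $M$ is contractible. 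Note that this is where the contractibility of $M$ is actually used---in the local condition at infinity, not (as you suggest) merely to trivialize the global homotopy type. To repair your write-up, replace your unspecified $Z$ by the single point at infinity and supply this explicit contraction of $V$.
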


The proof of Proposition \ref{Prop: contractible is Z-compact} is based on the next result.

\begin{lemma}\cite[Prop. 2.1]{BM91}\label{Lemma: Z compactification criterion}
\label{Prop: one-point compact}
Suppose that $X$ is a compactum and $Z \subset X$ a closed subset such that
\begin{enumerate}[(i)]
\item \label{condition empty set} $\operatorname{Int} Z = \emptyset,$
\item \label{condition dim}     $\dim X = n < \infty,$
\item \label{condition trivial loop}  for every $k = 0, 1, \dots, n$, every point $z \in Z$ and every neighborhood $U$ of $z$, there exists a neighborhood $V$ of $z$ such that $\alpha: S^k \to V \backslash Z$ extends to $\tilde{\alpha}: B^{k+1} \to U \backslash Z$ and
\item \label{condition ANR}    $X \backslash Z$ is an ANR.
\end{enumerate}
Then $X$ is an ANR and $Z \subset X$ is a $\mathcal{Z}$-set.
\end{lemma}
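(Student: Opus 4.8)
The plan is to prove the two conclusions---that $X$ is an ANR and that $Z$ is a $\mathcal{Z}$-set---through a single obstruction-theoretic device: an inductive, skeleton-by-skeleton construction that uses condition (\ref{condition trivial loop}) to fill simplices \emph{inside} the complement $X \setminus Z$. First I would recall the classical recognition theorem for finite-dimensional ANRs (see \cite{Hu65}): a compactum $X$ with $\dim X \leq n$ is an ANR precisely when it is $LC^n$, i.e.\ locally $k$-connected for every $0 \leq k \leq n$. Thus, by condition (\ref{condition dim}), establishing that $X$ is an ANR reduces to verifying the $LC^n$ property at every point. At a point of the open set $X \setminus Z$ this is immediate from condition (\ref{condition ANR}), since near such a point $X$ agrees with the ANR $X \setminus Z$. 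The work is therefore concentrated at points $z \in Z$.

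To verify $LC^n$ at $z \in Z$, I would take a neighborhood $U$ of $z$ and an arbitrary map $\alpha : S^k \to V$ (with $k \leq n$) on a neighborhood $V$ still to be chosen, and extend it over $B^{k+1}$ into $U$. The idea is to first replace $\alpha$ by a homotopic map missing $Z$ and only then invoke condition (\ref{condition trivial loop}). Concretely, triangulate $S^k$ finely. On the vertices, displace each vertex lying in $Z$ to a nearby point of $X \setminus Z$---possible because condition (\ref{condition empty set}) makes $X \setminus Z$ dense. Then extend across skeleta by induction on dimension $j = 1, \dots, k$: having mapped the $(j-1)$-skeleton into the complement, each $j$-simplex has its boundary sphere $S^{j-1}$ mapped into some $V \setminus Z$, and condition (\ref{condition trivial loop}) (applied with $k = j-1$) fills it by a map of $B^{j}$ into $U \setminus Z$. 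Since $\dim X = n$ bounds the relevant $k$, the induction terminates, producing a map $\alpha' : S^k \to U \setminus Z$ together with a controlled homotopy $\alpha \simeq \alpha'$ in $U$; a final application of condition (\ref{condition trivial loop}) extends $\alpha'$ over $B^{k+1}$ into $U \setminus Z \subseteq U$, and splicing in the homotopy yields the desired extension of $\alpha$. This gives $LC^n$, hence that $X$ is an ANR.

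For the second conclusion, with $X$ now known to be an ANR, I would show that $X \setminus Z$ is homotopy dense, which is the homotopy-theoretic characterization of $Z$ being a $\mathcal{Z}$-set and matches the paper's definition that $U \setminus Z \hookrightarrow U$ be a homotopy equivalence for every open $U$. For each $\epsilon > 0$ I would build a map $f : X \to X \setminus Z$ that is $\epsilon$-close to $\operatorname{id}_X$: choose a fine open cover $\mathcal{U}$ of $X$ with canonical map $\kappa : X \to |N(\mathcal{U})|$ to the nerve, and define $g : |N(\mathcal{U})| \to X \setminus Z$ by the same skeletal induction as above, again powered by condition (\ref{condition trivial loop}), so that $f = g \circ \kappa$ stays $\epsilon$-close to the identity and avoids $Z$. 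Running the same construction relative to an arbitrary open $U$ gives the homotopy equivalences $U \setminus Z \hookrightarrow U$ and thus witnesses $Z$ as a $\mathcal{Z}$-set.

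The main obstacle, and where care is required, is quantitative control of the tracks throughout the skeletal inductions. The neighborhoods $V \subseteq U$ furnished by condition (\ref{condition trivial loop}) must be nested and the mesh of the triangulations (or of the cover $\mathcal{U}$) chosen adaptively, so that the accumulated homotopies are genuinely $\epsilon$-small and remain continuous up to the points of $Z$ (equivalently, at $t = 0$ of the pushing homotopy). Finite-dimensionality (condition (\ref{condition dim})) is precisely what guarantees that the inductions terminate after finitely many stages and that condition (\ref{condition trivial loop}), stated only for $k = 0, \dots, n$, supplies every extension needed; nowhere-density (condition (\ref{condition empty set})) is what allows the inductions to start, by displacing vertices off $Z$. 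I would also double-check the exact connectivity index in the finite-dimensional ANR recognition theorem of \cite{Hu65}, to ensure the range $k \leq n$ in condition (\ref{condition trivial loop}) matches its hypotheses.
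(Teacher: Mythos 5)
The paper offers no proof of this lemma at all---it is quoted directly from Bestvina--Mess \cite[Prop. 2.1]{BM91}---and your argument correctly reconstructs that original proof along essentially the same lines: the classical recognition criterion that a compactum of dimension $\leq n$ is an ANR iff it is $LC^n$, the skeletal induction that uses condition (i) to displace vertices off $Z$ and condition (iii) to fill simplices within nested neighborhoods, and the fine-cover/nerve construction of maps $X \to X \backslash Z$ arbitrarily close to the identity to establish that $Z$ is a $\mathcal{Z}$-set. You also correctly isolate the one genuinely delicate point of the Bestvina--Mess argument---adaptive mesh control of the homotopy tracks so that the pushing homotopy remains continuous at the points of $Z$, with finite-dimensionality guaranteeing that only the range $k = 0, \dots, n$ of condition (iii) is ever needed---so there is nothing to flag.
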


\begin{proof}[Proof of Proposition \ref{Prop: contractible is Z-compact}]
The strategy is to show the one-point compactification $M \times [0,1) \sqcup \{\infty\}$ is a $\mathcal{Z}$-compactification. To that end, we verify Conditions (\ref{condition empty set})-(\ref{condition ANR}) in Lemma \ref{Lemma: Z compactification criterion}.  Verifying Conditions (\ref{condition empty set}) and (\ref{condition dim}) are trivial. The standard ANR theory shows that $M \times [0,1)$ is an ANR. To check Condition (\ref{condition trivial loop}), it suffices to show that for each neighborhood $U$ of infinity we may choose some neighborhood $V$ of infinity in $U$ which contracts in itself. Let us pick a compactum $C\subset M$ and a number $a\in (0,1)$ such that $K = C \times [0,a]$ containing $\overline{M\times [0,1)\backslash U}$.  Then $V =  M\times [0,1) \backslash K = (M \backslash C) \times [0,1) \cup M \times [a,1)$ is the desired neighborhood of infinity. To see that, we observe that there exists an obvious deformation retract of $V$ onto $M \times \{b\}$, where $b\in [a,1)$. Since $M \times \{b\}$ deformation retracts to a point in itself, the composition of the above two deformation retractions provides a contraction of $V$ in itself.
\end{proof}

\subsection{The fundamental group at infinity of $W^3 \times [0,1)$}\label{subsection: pi_1 at infinity of W}

The result below and the necessity of Theorem \ref{Th: Characterization Theorem} imply that the only way to see that $W^3 \times [0,1)$ is not pseudo-collarable is to show that the peripherally perfect semistability at infinity fails.

\begin{proposition}\cite[Prop. 12.1]{GG20}
Let $M$ be a connected open manifold. If $M$ has finite homotopy type, then $M \times [0, 1)$ is one-ended and inward tame, with $\sigma_\infty(M \times [0,1))=0$.
\end{proposition}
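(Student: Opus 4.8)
The plan is to reduce all three assertions to the single structural fact that every neighborhood of infinity in $M\times[0,1)$ is homotopy equivalent to $M$. First I would fix a clean compact exhaustion $C_1\subseteq C_2\subseteq\cdots$ of the open manifold $M$ together with a sequence $a_1<a_2<\cdots$ in $(0,1)$ with $a_i\to 1$, and set $K_i=C_i\times[0,a_i]$. Since each point $(x,t)\in M\times[0,1)$ satisfies $x\in C_i$ and $t<a_i$ for all large $i$, the $K_i$ form a compact exhaustion, so the sets $N_i=\overline{(M\times[0,1))\setminus K_i}$ form a cofinal sequence of clean neighborhoods of infinity. Exactly as in the proof of Proposition \ref{Prop: contractible is Z-compact}, I would write $N_i=(M\setminus\operatorname{Int}C_i)\times[0,1)\cup M\times[a_i,1)$ and check that the fiberwise straight-line homotopy $H\big((x,t),s\big)=\big(x,(1-s)t+sa_i\big)$ is a deformation retraction of $N_i$ onto $M\times\{a_i\}$. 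The only point to verify is that $H$ never leaves $N_i$: when $x\notin\operatorname{Int}C_i$ this is automatic, while when $x\in\operatorname{Int}C_i$ the hypothesis $(x,t)\in N_i$ forces $t\ge a_i$, whence the convex combination $(1-s)t+sa_i$ again lies in $[a_i,1)$. This yields $N_i\simeq M\times\{a_i\}\cong M$.

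Granting this, the three conclusions follow quickly. Each $N_i$ contains the connected set $M\times[a_i,1)$, and every remaining point of $N_i$ lies on a fiber $\{x\}\times[0,1)$ that meets this set, so $N_i$ is connected; hence $\varprojlim\pi_0(N_i)$ is a single point and $M\times[0,1)$ is one-ended. For inward tameness I would appeal directly to the definition: the $N_i$ are arbitrarily small (cofinal) neighborhoods of infinity, each homotopy equivalent to $M$, and $M$ has finite homotopy type and is therefore finitely dominated; thus $M\times[0,1)$ is inward tame, establishing Condition (\ref{condition a}). Finally, since finite homotopy type is precisely the vanishing of Wall's finiteness obstruction, $\sigma(M)=0$, and because $N_i\simeq M$ we get $\sigma(N_i)=0$ for every $i$. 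The obstruction at infinity $\sigma_\infty(M\times[0,1))$ is then represented by the sequence $\big(\sigma(N_i)\big)_i=(0,0,\dots)$, which is the zero element of $\varprojlim\{\widetilde{K}_0(\pi_1(N_i))\}$ since every bonding homomorphism carries $0$ to $0$; hence $\sigma_\infty(M\times[0,1))=0$, establishing Condition (\ref{condition c}).

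The argument is short, and the only genuinely delicate point is the deformation retraction---specifically confirming that $H$ stays inside $N_i$ across the seam between the region $x\in\operatorname{Int}C_i$ (where only $t\ge a_i$ is admissible) and the region $x\notin\operatorname{Int}C_i$ (where the entire fiber is available). I expect this case analysis on whether $x\in\operatorname{Int}C_i$ to be the main, if minor, obstacle. Notably, no compatibility of the homotopy equivalences $N_i\simeq M$ with the bonding maps is needed for the obstruction computation, since each coordinate already vanishes, which keeps the final step clean.
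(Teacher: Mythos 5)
Your argument is correct, and since the paper offers no proof of this proposition (it is quoted from \cite[Prop.~12.1]{GG20}), there is nothing to diverge from: your neighborhoods $N_i=(M\setminus\operatorname{Int}C_i)\times[0,1)\cup M\times[a_i,1)$ and the fiberwise deformation retraction onto $M\times\{a_i\}$ are exactly the construction the paper itself uses in the proof of Proposition \ref{Prop: contractible is Z-compact}, and the deductions of connectedness, finite domination, and the vanishing of each coordinate $\sigma(N_i)=\sigma(M)=0$ are the standard ones. The one point worth a remark is that $K_i=C_i\times[0,a_i]$ has corners along $\operatorname{Fr}C_i\times\{a_i\}$, so "clean" requires a routine corner-smoothing; this does not affect the homotopy-theoretic conclusions.
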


First, we prepare some algebraic devices.

\begin{definition}
A group $G$ is said to be \emph{hypoabelian} if the following equivalent conditions are satisfied:
\begin{enumerate}
\item $G$ contains no nontrivial perfect subgroup.
\item The transfinite derived series\footnote{The transfinite derived series of a group is an extension of its derived series such that the successor of a given subgroup is its commutator subgroup, and subgroups at limit ordinals are given by intersecting all previous subgroups.} terminates at the identity.
\end{enumerate}
\end{definition}

\begin{example}
\begin{enumerate}

\item Every abelian group is hypoabelian.

\item Solvable groups, residually solvable groups and free groups are hypoabelian.

\item Every right-angled Artin group is hypoabelian.

\item The Baumslag-Solitar groups $\operatorname{BS}(1,n)$ are solvable, thus, hypoabelian.

\end{enumerate}
\end{example}

Hypoabelianity is closed under a number of operations:

\begin{enumerate}

\item Free products of hypoabelian groups are hypoabelian.

\item Every extension of a hypoabelian group by a hypoabelian group is hypoabelian.

\item Let $H$ be a subgroup of a group $G$. Since conjugate subgroups are isomorphic, if $H$ is hypoabelian, so are the conjugates of $H$.
\end{enumerate}


\begin{definition}
A \emph{split} HNN-\emph{extension} $G$ of a group $B$ is an HNN-extension $G = \langle B,t | t^{-1}\eta(a)t = \iota(a) \rangle$, where $a\in A$ such that one of the injections $\eta,\iota: A \hookrightarrow B$ (say $\eta$) \emph{splits}, that is, $B$ is a split extension $N \lhook\joinrel\longrightarrow B  \overset{\xtwoheadrightarrow{\phi}}{\underset{\eta}{\longleftarrow\joinrel\rhook}} A$.

\end{definition}

\begin{definition}
A group $G$ is a \emph{split amalgamated free product} iff $G$ may be expressed as an amalgamated free product $B\ast_A C$, where one of the injections $A \hookrightarrow B$ or $A \hookrightarrow C$ splits.
\end{definition}

The results below show that hypoabelianism can be preserved under delicate group operations.

\begin{proposition}\cite[Thm. E]{How79}
\label{Prop: splitting is hypoabelian}
Split amalgamated free products of hypoabelian groups are hypoabelian.
\end{proposition}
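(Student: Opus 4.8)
The plan is to reduce the statement to two of the closure properties already recorded above---that every extension of a hypoabelian group by a hypoabelian group is hypoabelian, and that free products of hypoabelian groups are hypoabelian---by exploiting the splitting to produce a retraction of $G$ onto one of its factors. By symmetry we may assume $G = B \ast_A C$ with the injection $A \hookrightarrow B$ splitting, so there is a retraction $\rho \colon B \twoheadrightarrow A$ with $\rho|_A = \mathrm{id}_A$. First I would assemble a homomorphism $r \colon G \to C$ from the identity $C \to C$ together with the composite $B \xrightarrow{\rho} A \hookrightarrow C$; these two maps agree on $A$ because $\rho$ fixes $A$ pointwise, so the universal property of the amalgamated free product yields $r$, and since $r|_C = \mathrm{id}_C$ the map $r$ is a retraction. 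Thus $G$ sits in a split short exact sequence $1 \to K \to G \to C \to 1$ with $K = \ker r$.

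The heart of the argument is to identify $K$. Here I would invoke Bass--Serre theory: $G$ acts on its Bass--Serre tree $T$ with vertex stabilizers the conjugates of $B$ and $C$ and edge stabilizers the conjugates of $A$, and $K$ inherits an action on $T$. Because $r$ restricts to the identity on $A \subseteq C$, a direct computation shows $K \cap gAg^{-1} = 1$ and $K \cap gCg^{-1} = 1$ for every $g \in G$, while $K \cap gBg^{-1} = gNg^{-1}$ where $N = \ker\rho$. Hence $K$ acts on $T$ with trivial edge stabilizers and with vertex stabilizers that are either trivial or conjugates of $N$; the structure theorem for groups acting on trees with trivial edge stabilizers then presents $K$ as a free product of conjugates of $N$ together with a (possibly trivial) free group. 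Since $N \leq B$ and subgroups of hypoabelian groups are plainly hypoabelian (a perfect subgroup of a subgroup is a perfect subgroup of the ambient group), each conjugate $gNg^{-1}$ is hypoabelian by the conjugacy closure property, and free groups are hypoabelian; therefore $K$ is hypoabelian by the free-product closure property.

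Finally I would conclude that $G$ is an extension of the hypoabelian group $C$ by the hypoabelian group $K$, so $G$ is hypoabelian by the extension closure property. I expect the main obstacle to be the clean identification of $K$ as a free product---in particular verifying that all edge stabilizers of the induced $K$-action are trivial and correctly accounting for the free part arising from the quotient graph of groups; once the Bass--Serre decomposition of $K$ is in hand, the remaining steps are immediate applications of the stated closure properties.
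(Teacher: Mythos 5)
Your argument is correct, but note that the paper does not actually prove this proposition: it is quoted verbatim from Howie \cite{How79}, Theorem E, so there is no in-paper proof to compare against. What you have written is a legitimate self-contained proof, and it is in fact the same technique the paper deploys one proposition later for Proposition \ref{Prop: free product of hypoabelian}: build a surjection of $G$ onto a hypoabelian quotient, identify the kernel as a free product of conjugates of subgroups of the factors together with a free group (there via H.~Neumann's theorem \cite{Neu49}, in your version via the Bass--Serre tree), and finish with the extension and free-product closure properties. Your identification of $K=\ker r$ is clean because $K$ is normal, so $K\cap gAg^{-1}=g(K\cap A)g^{-1}=1$ and likewise for the $B$- and $C$-vertex stabilizers; the only points worth making explicit are (i) that general subgroups, not just conjugates, of hypoabelian groups are hypoabelian (immediate from the ``no nontrivial perfect subgroup'' characterization, though the paper only lists conjugates among its closure properties), and (ii) that the free-product closure property must be applied to a possibly infinite free product of conjugates of $N$, which still holds by the Kurosh subgroup theorem since a perfect subgroup of a free product abelianizes to the trivial group factorwise. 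With those two small remarks supplied, your proof is complete and arguably more transparent than an external citation.
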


Given a nonempty set of groups $\{G_\lambda|\lambda \in \Lambda\}$ together with a group $H$ which
is isomorphic with a subgroup $H_\lambda$ of $G_\lambda$ by means of monomorphism $\phi_\lambda: H \to G_\lambda$. There is a frequently used term known as the free product of the $G_\lambda$'s with the amalgamated subgroup $H$. Roughly speaking, this is the largest
group generated by the $G_\lambda$'s in which the subgroups $H_\lambda$ are identified with $H$ by $\phi_\lambda$. Such groups are known as 
\emph{generalized free products}. Readers are referred to \cite{Rob95} for a more explicit description.

Let $A$ be a hypoabelian group. Then there exists a transfinite derived series of $A$ terminates at some ordinal $\alpha$. We call all derived subgroups a \emph{filtration} of $A$. Let $B$ be a second hypoabelian group with a transfinite derived series terminates at some ordinal $\beta$, and let $a\in A$, $b\in B$ be two non-identity elements. There exist ordinals $m\in \{0,1,\dots,\alpha\}$ and $n\in \{0,1,\dots,\beta\}$ such that the generators $a \in A^{(m)} \backslash  A^{(m+1)}$ and
$b \in B^{(n)} \backslash B^{(n+1)}$, where $A^{(m)}$ and $B^{(n)}$ denote the $m$-th
and $n$-th derived subgroups of $A$ and $B$, respectively. Then we say the filtrations of $A$ and $B$ are \emph{compatible} if cosets $aA^{(m+1)}$ and
$bB^{(n+1)}$ have the same order in factor groups $A/A^{(m+1)}$ and $B/B^{(n+1)}$, respectively.

\begin{proposition}\label{Prop: free product of hypoabelian}
The generalized free product of two hypoabelian groups $A$ and $B$ amalgamating a cyclic subgroup is hypoabelian, provided that the filtrations of $A$ and $B$ are compatible.
\end{proposition}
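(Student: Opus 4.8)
The plan is to prove the equivalent statement that $G = A \ast_C B$ contains no nontrivial perfect subgroup, where $C = \langle a \rangle = \langle b \rangle$ is the amalgamated cyclic group. The organizing tool is the action of $G$ on its Bass--Serre tree $T$, whose vertex stabilizers are the conjugates of $A$ and $B$ and whose edge stabilizers are the conjugates of $C$. Let $P \leq G$ be a perfect subgroup and restrict the action to $P$. First I would dispose of the \emph{elliptic} case: if $P$ fixes a vertex of $T$, then $P$ lies in a conjugate of $A$ or of $B$. Since hypoabelianity passes to subgroups and conjugates (a perfect subgroup of $P$ would be a perfect subgroup of that conjugate), $P$ is a perfect subgroup of a hypoabelian group and hence trivial.

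The substance is the non-elliptic case. If $P$ fixes no vertex, Bass--Serre theory presents $P$ as the fundamental group of a graph of groups whose vertex groups are the intersections $P \cap (gAg^{-1})$, $P \cap (gBg^{-1})$ (themselves hypoabelian, being subgroups of conjugates of $A$ and $B$) and whose edge groups are the intersections $P \cap (gCg^{-1})$, which are cyclic. Collapsing the vertex groups gives a surjection of $P$ onto the free fundamental group of the underlying graph; since $P$ is perfect and free groups are hypoabelian, this free quotient must vanish, so the underlying graph is a tree. It then suffices to treat the case of a single amalgamating edge, $P = P_1 \ast_D P_2$ with $D$ cyclic and $P_1, P_2$ hypoabelian, after reducing the (a priori infinite) tree of groups to this core situation.

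Here I would invoke Proposition~\ref{Prop: splitting is hypoabelian}: whenever one of the edge inclusions $D \hookrightarrow P_i$ splits, the amalgam is a split amalgamated free product of hypoabelian groups, hence hypoabelian, forcing $P = 1$. To see that perfectness itself is controlled by the hypothesis, I would compute the abelianization by the Mayer--Vietoris sequence for a tree of groups,
\[
P^{ab} \;\cong\; \operatorname{coker}\Big( D^{ab} \xrightarrow{\ \partial\ } P_1^{ab}\oplus P_2^{ab}\Big),
\]
so that perfectness of $P$ is equivalent to surjectivity of $\partial$. Because $D$ is cyclic, its two images generate cyclic subgroups of $P_1^{ab}$ and $P_2^{ab}$; surjectivity of $\partial$ therefore forces both abelianizations to be cyclic and the two images to have coprime orders. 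This is precisely what the compatibility of the filtrations forbids: reading off the derived-series level of the amalgamated generator shows that the corresponding cosets have \emph{equal} order, and coprime together with equal forces order one, whence $P_1^{ab}=P_2^{ab}=0$, so $P_1,P_2$ are perfect and hypoabelian, hence trivial, and $P$ is trivial.

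The main obstacle is this last case, and two points require genuine care. First, the compatibility hypothesis is stated for $A$, $B$ and the amalgamated generators $a,b$, whereas the order-matching argument is needed for the induced edge and vertex groups $D \leq gCg^{-1}$ and $P_i$; I must therefore show that compatibility \emph{descends} to the sub-amalgam, which is where the transfinite derived series and the precise definition of compatible filtrations do the real work, and where I would likely argue by transfinite induction on the terminating ordinals of the derived series of $A$ and $B$. Second, I must legitimately reduce an infinite tree of groups to the single-edge core (or run the abelianization argument directly on the tree), and separately track the bookkeeping differences between the infinite cyclic and finite cyclic cases for $C$. Everything else---the elliptic case, the vanishing of the free quotient, and the Mayer--Vietoris computation---is routine once this core obstruction is resolved.
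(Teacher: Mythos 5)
Your overall strategy (act on the Bass--Serre tree, kill the free quotient, and control perfectness of subgroups via a Mayer--Vietoris computation of abelianizations) is genuinely different from the paper's, but it has two gaps that I do not think are technicalities, and the one you defer to "transfinite induction" is in fact the whole difficulty. First, the compatibility hypothesis concerns only the specific elements $a,b$ and the specific levels $m,n$ of the transfinite derived series of $A$ and $B$ where they sit: it says that $aA^{(m+1)}$ and $bB^{(n+1)}$ have equal order in $A/A^{(m+1)}$ and $B/B^{(n+1)}$. Your coprimality argument, however, needs equal orders for the images of a generator of $D$ in $P_1^{\mathrm{ab}}$ and $P_2^{\mathrm{ab}}$, where $P_1,P_2$ are \emph{arbitrary} subgroups of conjugates of $A$ and $B$ and $D$ is an arbitrary (possibly proper, e.g.\ $\langle a^k\rangle$) subgroup of a conjugate of $\langle a\rangle$. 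Nothing in the hypothesis controls these orders; the derived series of a subgroup is not computed from that of the ambient group, and if $a\in A^{(1)}$ the compatibility data says nothing at all about abelianizations. So the claimed "descent" is not a bookkeeping step to be filled in later --- it is unestablished and, as far as I can see, false in general. Second, the reduction of an infinite tree of groups to a single-edge core is not legitimate: writing $P$ as a direct limit of fundamental groups of finite subtrees, perfectness of $P$ does not pass to those subamalgams, and running Mayer--Vietoris globally over infinitely many edges destroys the edge-by-edge coprimality trick.

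The paper avoids both problems by never looking at individual perfect subgroups. It uses compatibility exactly once, to make the central product $C=\{A/A^{(m+1)}\times B/B^{(n+1)};\,aA^{(m+1)}=bB^{(n+1)}\}$ well defined (equal orders of the two cosets are needed for the identification), and then maps $G\to C$ by killing the generators of each factor in the other coordinate. The kernel $K$ is normal and meets $\langle a\rangle=\langle b\rangle$ trivially, hence meets every conjugate of the amalgamated subgroup trivially, so H.~Neumann's subgroup theorem for generalized free products presents $K$ as a free product of conjugates of subgroups of $A$ and $B$ with a free group --- with no amalgamation left. Closure of hypoabelianity under free products and subgroups makes $K$ hypoabelian, and closure under extensions finishes the proof. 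If you want to keep a tree-theoretic flavor, the viable move is your elliptic/non-elliptic analysis applied to $K$ (whose edge stabilizers are trivial), not to an arbitrary perfect subgroup of $G$; applied to general $P$ the method founders on exactly the two points above.
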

\begin{proof}
The proof uses a similar argument in the proof of \cite[Thm. 2]{KM11}. Let $a \in A$, $b \in B$ be
two non-identity elements. Assuming the notions above, we establish the amalgamated product $G = \{A\ast B; a = b\}$ of $A$ and $B$
 with elements $a$ and $b$ identified. Let
$C$ be the central product of $A/A^{(m+1)}$ and $B/B^{(n+1)}$ amalgamating $aA^{(m+1)}$ with
$bB^{(n+1)}$:
$$C = \{A/A^{(m+1)} \times B/B^{(n+1)};aA^{(m+1)}=bB^{(n+1)}\}.$$
Define a homomorphism $\phi: G \to C$ sending $a_i$ to $(a_i A^{(m+1)}, B^{(n+1)})$
and $b_j$ to $(A^{(m+1)}, b_j B^{(n+1)})$, where $a_i,b_j$ are generators of $A,B$, respectively. Let $K = \ker \phi$ and $D = \langle a \rangle = \langle b \rangle$. Then $K \cap D = 1$.
By \cite{Neu49}, $K$ is actually a free product of conjugates of subgroups of $A$ and $B$, and a free group. Thus, $K$ is
hypoabelian. Since $G$ is an extension of a hypoabelian group by a hypoabelian group, $G$ is also hypoabelian.
\end{proof}
\begin{remark}\label{Remark: compatibility}
Without the compatibility, we may not conclude that the cyclic amalgamation is hypoabelian even though $A$ and $B$ are assumed to be solvable or nilpotent. The proofs of \cite[Thm. 2]{KM11} and \cite[Thm. 2]{Kah11} are flawed since both the arguments rely on the compatibility in the central product structure. It follows from \cite[Thm. A]{GS99} that there are torsion-free metabelian groups whose commutator quotients are nontrivial with prime exponent for any odd prime $p$. Choosing groups $A$ and $B$ with different primes $p$, say 3 and 5, gives infinite order elements $a\in A$ and $b\in B$ (not in the commutator subgroup) that have different orders modulo the commutator subgroup. Hence, the central product structure fails.
\end{remark}


The following lemma quoted from \cite[Lemma 4.1]{GT03} indicates the strategy we will use to prove the main theorem.
\begin{lemma} \label{Lemma:hypoabelian groups}
Let 
\begin{equation}
G_0 \leftarrow G_1 \leftarrow G_2 \leftarrow \cdots
\end{equation}
be an inverse sequence of groups with surjective but non-injective bonding homomorphism. Suppose each $G_i$ is a hypoabelian group. Then the inverse sequence is not perfectly semistable.
\end{lemma}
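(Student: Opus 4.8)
The plan is to argue by contradiction, using hypoabelianity to kill the perfect kernels that a perfectly semistable structure would be forced to produce on the $G$-side. Suppose the sequence were perfectly semistable. Then it is pro-isomorphic to a sequence $H_0 \twoheadleftarrow H_1 \twoheadleftarrow \cdots$ of finitely generated groups with surjective bonding maps whose kernels are perfect, and I would fix the commutative ladder of (\ref{basic ladder diagram}) relating subsequences of $\{G_i\}$ (top row) and $\{H_i\}$ (bottom row). Write $\phi_k\colon H_{j_k}\to G_{i_k}$ for the up-arrows and $\psi_k\colon G_{i_{k+1}}\to H_{j_k}$ for the down-arrows, and let $\beta_k=\phi_k\circ\psi_k$ and $\gamma_k=\psi_k\circ\phi_{k+1}$ be the bonding composites these produce on the two rows. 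First I would record the bookkeeping: passing to subsequences keeps the $G$-bonding composites $\beta_k$ surjective and non-injective (a surjection onto a group meeting a nontrivial kernel produces nontrivial kernel elements upstairs), and keeps the $H$-bonding composites $\gamma_k$ surjective with \emph{perfect} kernel, since an extension of a perfect group by a perfect group is perfect. Because $\beta_k$ and $\gamma_k$ are surjective, all the diagonal maps $\phi_k$ and $\psi_k$ are surjective as well.

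The crux is a kernel computation transporting perfectness from the $H$-row to the $G$-row. I would first show $\ker\gamma_k=\ker\phi_{k+1}$: the inclusion $\ker\phi_{k+1}\subseteq\ker\gamma_k$ is immediate from $\gamma_k=\psi_k\circ\phi_{k+1}$, while for the reverse one observes that $\phi_{k+1}(\ker\gamma_k)$ is a perfect subgroup (the image of the perfect group $\ker\gamma_k$) of the hypoabelian group $G_{i_{k+1}}$, hence trivial. The surjectivity identity $\phi_{k+1}(\ker\gamma_k)=\ker\psi_k$ then forces $\ker\psi_k=1$, so each $\psi_k$ is an isomorphism; and $\ker\phi_{k+1}=\ker\gamma_k$ is perfect.

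Finally, since $\psi_k$ is an isomorphism, $\ker\beta_k=\psi_k^{-1}(\ker\phi_k)\cong\ker\phi_k$, and $\ker\phi_k=\ker\gamma_{k-1}$ is perfect by the previous step. Thus $\ker\beta_k$ is a perfect subgroup of the hypoabelian group $G_{i_{k+1}}$, so $\ker\beta_k=1$ and $\beta_k$ is injective, contradicting that $\beta_k$ is non-injective. I expect the main obstacle to be the ladder bookkeeping: pinning down the directions and surjectivity of $\phi_k,\psi_k$ and checking that both non-injectivity (on the $G$-side) and perfect kernels (on the $H$-side) survive the passage to the subsequences dictated by pro-isomorphism. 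The conceptual key, that hypoabelianity collapses $\ker\psi_k$ to the trivial group so the $G$-bonding is forced to inherit the perfect kernel of the $H$-bonding, is short once that scaffolding is in place.
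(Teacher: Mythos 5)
Your proof is correct: the ladder bookkeeping, the identification $\ker\gamma_k=\ker\phi_{k+1}$, the collapse of $\ker\psi_k$ via hypoabelianity, and the resulting injectivity of the $G$-side composites (for $k\geq 1$) all check out. The paper itself does not prove this lemma but quotes it from \cite[Lemma 4.1]{GT03}, and your argument is essentially the standard pro-isomorphism ladder chase given there.
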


Now, we reproduce a famous example originially proposed by Bing, i.e., a 3-dimensonal contractible open manifold $W^3$,\footnote{Haken \cite{Hak68} first proved that $W^3$ embed in no compact 3-manifold. Recently, the author \cite{Gu21} showed that $W^3$ cannot even embed in a much more general compact space --- a compact, locally connected and locally 1-connected 3-dimensional metric space.} see \cite{KM62}, \cite{Hak68} or \cite{Gu21}.

Let $\{T_l|l = 0,1,2,\dots \}$ be a collection of disjoint solid tori standardly embedded in $S^3$. Let the solid torus $T_l'$ be embedded in $\operatorname{Int}T_l$ as in Figure \ref{3_1knot}. The cube $C_l$ contains a tamely embedded thickened trefoil knot with the interior of a standard ball removed. In this paper, the choice of such a knot is not unique. Alternating (or more generally adequate) knots of nonzero writhe number and whose knot groups are hypoabelian can be applied to create more options for $K$ (look ahead to Corollary \ref{Cor: Infinitely many 3D examples}). To produce more examples, one simply removes a standard ball from a thickened knot $K$ and use the resulting space to replace the trefoil-knotted part in $C_l$. Let the oriented simple closed curves $\alpha_l$, $\beta_l$, $\gamma_l$ and $\delta_l$ be as shown in Figure \ref{3_1knot}. The curves $\alpha_l$ and $\beta_l$ are transverse in $\partial T_l$, and meet at the point $q_l \in \partial T_l$. In a similar fashion, the curves $\gamma_l$ and $\delta_l$ are transverse in $\partial T_l'$, and meet at the point $p_l \in \partial T_l'$. For $l \geq 1$, let $L_l = T_l \backslash \operatorname{Int}T_l'$. Define an embedding $h_{l+1}^{l}: T_l \to T_{l+1}$ so that $T_l$ is carried onto $T_{l+1}'$ with $h_{l+1}^{l}(\alpha_l) = \delta_{l+1}$ and $h_{l+1}^{l}(\beta_l) = \gamma_{l+1}$. $W^3$ is the direct limit of the $T_l$'s and denoted as $W^3 = \lim\limits_{l\to \infty}(T_l,h_{l+1}^{l})$. That is equivalent to viewing $W^3$ as the quotient space: $\sqcup_l T_l \xrightarrow{q} W^3$, where $\sqcup_l T_l$ is the disjoint union of the $T_l$'s and $q$ is the quotient map induced by the relation $\sim$ on $\sqcup_l T_l$. If $x\in T_i$ and $y\in T_j$, then $x \sim y$ iff there exists a $k$ larger than $i$ and $j$ such that $h_{k}^{i}(x) = h_{k}^{j}(y)$, where $h_{t}^{s} = h_{t}^{t-1} \circ h_{t-1}^{t-2} \circ \cdots \circ h_{s+2}^{s+1} \circ h_{s+1}^{s}$ for $t > s$. Let $\iota_l: T_l \hookrightarrow \sqcup_l T_l$ be the obvious inclusion map. The composition $q \circ \iota_l$ embeds $T_l$ in $W^3$ as a closed subset. The injectivity follows from the injectivity of $h_{k+1}^{k}$. It is closed since for $j > l$ the set $h_{j}^{l}(T_l)$ is closed in $T_j$. Let $T_l^*$ denote $q \circ \iota_l(T_l)$. $T_l^*$ is embedded in $T_{l+1}^*$ just as the way $h_{l+1}^{l}(T_l)$ ($= T_{l+1}'$) is embedded in $T_{l+1}$. 
Hence, Figure \ref{3_1knot} can be viewed as a picture of the embedding of $T_l^*$ in $T_{l+1}^{*}$. In general, for $k > l$, $T_l^*$ is embedded in $T_k^*$ just as $h_{k}^{l} (T_l)$ is embedded in $T_k$. 

\begin{figure}[h!]
        \centering
       \includegraphics[ width=8cm, height=10cm]{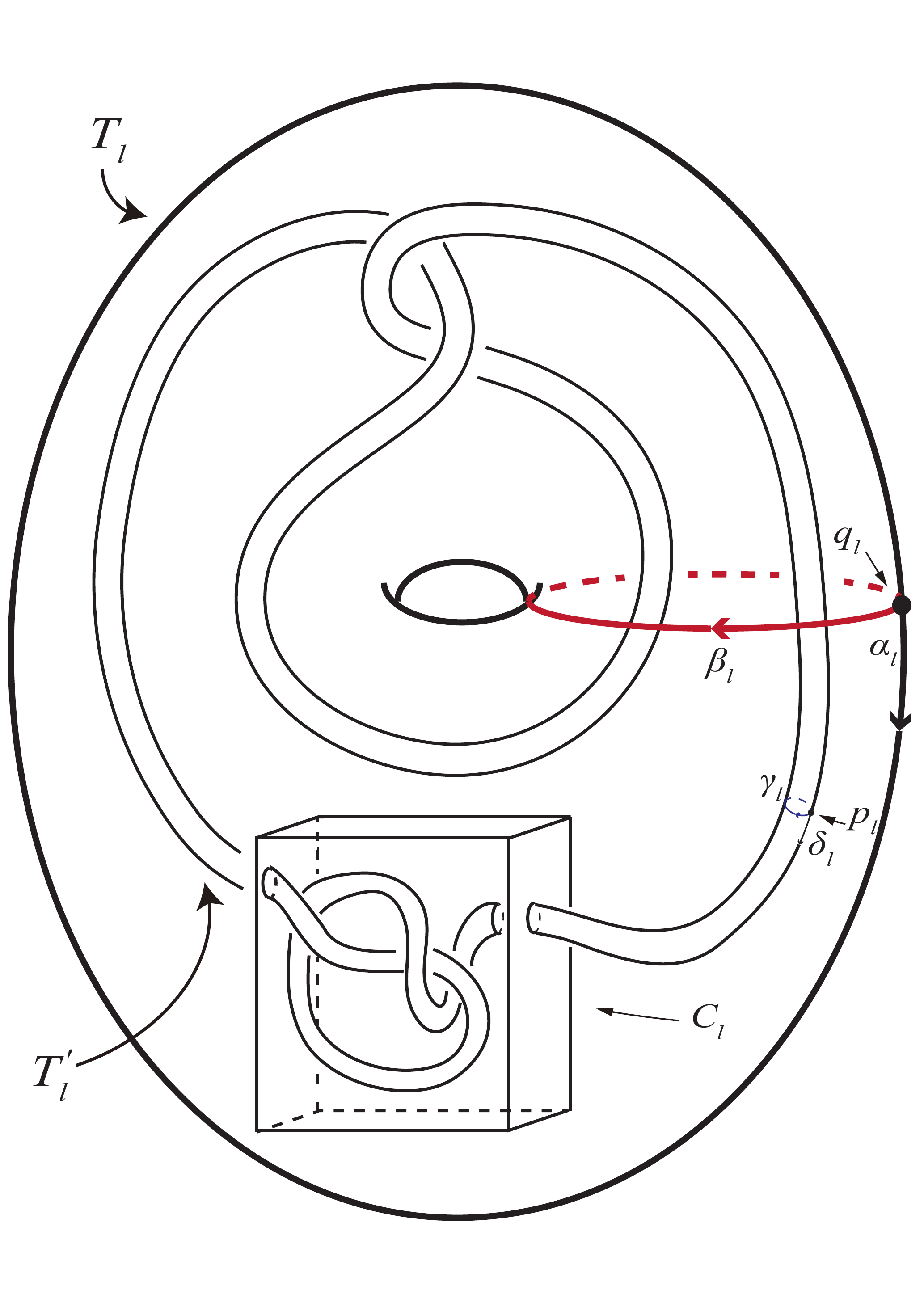}
       \caption{$L_l = T_l \backslash T_l'$. The "inner" boundary component of $L_l$ is $\partial T_l'$. The "outer" boundary component 
       of $L_l$ is $\partial T_l$}
        \label{3_1knot}
\end{figure}

\begin{proposition}
$W^3$ as constructed above is an contractible open connected $3$-manifold.
\end{proposition}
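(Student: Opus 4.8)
The plan is to exploit the fact that $W^3$ is the increasing union $T_1^*\subseteq T_2^*\subseteq\cdots$, in which $T_l^*\subseteq\operatorname{Int}T_{l+1}^*$ by construction, and to read off both the manifold structure and the contractibility from this telescope. First I would settle the local structure. Every point of $W^3$ lies in some $T_l^*$, hence in the open set $\operatorname{Int}T_{l+1}^*$, and each $\operatorname{Int}T_{l+1}^*$ is homeomorphic to an open subset of a solid torus, so to an open subset of $\mathbb{R}^3$. Thus the family $\{\operatorname{Int}T_{l+1}^*\}_{l}$ is an open cover of $W^3$ by Euclidean charts, and $W^3$ is a boundaryless $3$-manifold. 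It is connected as a nested union of connected sets, and noncompact as a strictly increasing union of compacta, so $W^3$ is an open connected $3$-manifold. (One also checks that the colimit topology agrees with the union topology, which is routine here since each $h_{l+1}^{l}$ is a closed embedding onto a subset of $\operatorname{Int}T_{l+1}$.)

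For contractibility, the key observation is that each inclusion $j_l\colon T_l^*\hookrightarrow T_{l+1}^*$ is null-homotopic. Indeed, $T_l^*$ sits inside $T_{l+1}^*$ exactly as $T_{l+1}'$ sits inside $T_{l+1}$, the solid torus $T_{l+1}^*$ deformation retracts onto its core, and the core of $T_{l+1}'$ is null-homotopic in $T_{l+1}$; composing these, $j_l$ is homotopic to a constant map. Granting this, I would invoke the fact that homotopy commutes with the direct limit of a nested open exhaustion: for each $k\ge 1$,
\[
\pi_k(W^3)\cong\varinjlim\bigl(\pi_k(T_1^*)\xrightarrow{(j_1)_*}\pi_k(T_2^*)\xrightarrow{(j_2)_*}\cdots\bigr),
\]
and every bonding map $(j_l)_*$ vanishes because $j_l$ is null-homotopic (the basepoint bookkeeping for $k=1$ is harmless, since a freely null-homotopic inclusion has trivial image regardless). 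Hence $\pi_k(W^3)=0$ for all $k\ge1$, and since $W^3$ is connected it is weakly contractible.

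Finally, since $W^3$ is a PL $3$-manifold it is triangulable and in particular has the homotopy type of a CW complex (it is an ANR, as recorded in Section~\ref{Section: Terminology}); Whitehead's theorem then upgrades weak contractibility to genuine contractibility. I expect the main obstacle to be the geometric input feeding the middle paragraph, namely verifying that the core of $T_{l+1}'$ is null-homotopic in $T_{l+1}$. This must be extracted from the explicit embedding data of Figure~\ref{3_1knot} --- the identifications $h_{l+1}^{l}(\alpha_l)=\delta_{l+1}$ and $h_{l+1}^{l}(\beta_l)=\gamma_{l+1}$ together with the trefoil-knotted cube $C_l$ --- and amounts to checking that, despite the nontrivial knotting introduced to make the fundamental group at infinity interesting, the core of the inner torus winds algebraically zero times around $T_{l+1}$ and in fact bounds a singular disk there. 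Once this single clasping computation is in hand, the remainder of the argument is formal.
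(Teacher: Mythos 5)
Your argument is correct and is essentially the same as the paper's, which simply defers to the proof of Proposition~2.1 of \cite{Gu21}: that proof runs exactly this telescope argument (charts from the nested interiors $\operatorname{Int}T_{l+1}^*$, null-homotopic bonding inclusions $T_l^*\hookrightarrow T_{l+1}^*$, vanishing of all homotopy groups since every compact set lies in some $T_l^*$, then Whitehead's theorem for the ANR $W^3$). The one input you leave unverified --- that the core of $T_{l+1}'$ is null-homotopic in $T_{l+1}$ --- is built into Figure~\ref{3_1knot} by design (the clasped, trefoil-threaded inner torus bounds a singular disk in $T_{l+1}$ even though it is geometrically essential), and note the small slip that it is $T_l^*\cong T_{l+1}'$, not the ambient $T_{l+1}^*$, that you should deformation retract onto its core before applying that null-homotopy.
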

\begin{proof}
See the proof of \cite[Prop. 2.1]{Gu21}.
\end{proof}

To show that the fundamental group at infinity of $W^3 \times [0,1)$ is not perfectly semistable, we start from a specific cofinal sequence of neighborhoods $\{N_j\}_{j=0}^{\infty}$ of infinity of $W^3$, where $N_j = \cup_{l=j}^{\infty} L_{l}^*$ and $L_l^* = T_{l+1}^*\backslash \operatorname{Int} T_{l}^*$.  Taking the product of $N_j$ with $[0,1)$ we define $V_j = N_j \times [0,1) \cup W^3 \times [1-\frac{1}{j+2},1)$. Then $\left\{  V_{j}  \right\}_{j= 1}^{\infty}$ is a cofinal sequence of clean neighborhoods of infinity for which each $N_0 \cup V_{j}$ is connected. Form an inverse sequence in which baserays are suppressed
\begin{equation}
\pi_{1}\left(  N_0 \cup V_{1} \right)  \overset{\mu
_{2}}{\longleftarrow}\pi_{1}\left(  N_0 \cup V_{2}  \right)
\overset{\mu_{3}}{\longleftarrow}\pi_{1}\left(  N_0\cup V_{3}\right)  \overset{\mu_{4}}{\longleftarrow}\cdots
\label{pi_1 of W^3}
\end{equation}
Since $V_j$ deformation retracts onto $W^3 \times \{1-\frac{1}{j+2}\}$ and $W^3$ is contractible, $\pi_{1}\left(  N_0 \cup V_{j} \right)\cong \pi_{1}\left(  L_0^* \cup \cdots  \cup L_{j-1}^*/\partial T_j^*\right)$. Thus, Sequence (\ref{pi_1 of W^3}) is pro-isomorphic to the following sequence
with basepoints suppressed
\begin{equation}
\pi_{1}\left(  L_0^*/\partial T_1^* \right)  \overset{\mu
_{2}}{\longleftarrow}\pi_{1}\left(  L_0^* \cup L_1^* /\partial T_2^* \right)
\overset{\mu_{3}}{\longleftarrow}\pi_{1}\left( L_0^* \cup L_1^* \cup L_2^*/\partial T_3^* \right)  \overset{\mu_{4}}{\longleftarrow}\cdots
\label{pi_1 of quotient space of W^3}
\end{equation}
To better understand each term in Sequence (\ref{pi_1 of quotient space of W^3}), we consider the knot complement $K_j$ defined as $K_j = S^3 \backslash \operatorname{Int}h_{j}^{0} (T_0)$ for $j \geq 1$. That is, $K_j$ is obtained by sewing the solid torus $S^3 \backslash \operatorname{Int}T_j$ to $T_j \backslash \operatorname{Int}h_{j}^0 (T_0)$ along $\partial T_j$. By the construction of $W^3$, the pair $(T_j^*, T_0^*)$ is homeomorphic to $(T_j, h_j^0 (T_0))$.
Thus, 
$$\pi_1 \left( (T_j \backslash \operatorname{Int} h_j^0 (T_0)) /\partial T_j  \right) \cong \pi_{1}\left( L_0^* \cup \cdots  \cup L_{j-1}^*/\partial T_j^* \right).$$ 
By \cite[Claim 2]{Gu21} knot group $\pi_1(K_j)$ is isomorphic to $\pi_1 \left( (T_j \backslash \operatorname{Int} h_j^0 (T_0)) /\partial T_j  \right)$.  Hence, Sequence (\ref{pi_1 of quotient space of W^3}) is pro-isomorphic to a sequence of knot groups

\begin{equation}
\pi_{1}\left(  K_1 \right)  \overset{\mu
_{2}}{\longleftarrow}\pi_{1}\left(  K_2 \right)
\overset{\mu_{3}}{\longleftarrow}\pi_{1}\left( K_3 \right)  \overset{\mu_{4}}{\longleftarrow}\cdots
\label{knot groups}
\end{equation}
The details about the construction of knot space $K_j$ can be found in \cite{Gu21}. It's clear that $\pi_1(K_1)$ is isomorphic to $\pi_1(S^3 \backslash K)$. As one option, we can choose a fibered knot for $K$. Recall that a knot $k \subset S^3$ is \emph{fibered} if its exterior admits a locally trivial fibration over $S^1$.

\begin{proposition}\label{Prop: 9_42 is hypoabelian}
Fibered knot groups are hypoabelian.
\end{proposition}
\begin{proof}
By \cite{Neu65}, the commutator subgroup of the fibered knot group is finitely generated, i.e., free. 
\end{proof}
\begin{remark}
The Alexander polynomial of fibered knot is monic \cite{Neu65} \cite{Rap60} \cite{Sta61}, i.e., the coefficient of the highest degree term of the normalized Alexander polynomial is a unit $\pm 1$. By the duality of the Alexander polynomial, its lowest degree term is also $\pm 1$. This criterion is sufficient for alternating knots \cite{Mur63} and prime knots up to 10 crossings \cite{Kan79}. In general, the converse is not true. 
The Alexander polynomial of any fibered knot is also the Alexander polynomial of infinitely many nonfibered knots.
\end{remark}

%
%

Next, we briefly discuss the concepts of crossing number, writhe and linking number and the connections among them. \emph{Crossing number} is the minimal number
of simple self-intersections which appear in a knot diagram or link diagram, i.e., a planar picture (regular projection) of a knot or link, of a given type.
The \emph{signs} of crossings in a knot diagram (equipped with an orientation) are defined by the (right-hand) rules in Figure \ref{Signs of crossings in a knot diagram}.
\begin{figure}[h!]
\centering
\begin{subfigure}{0.4\textwidth}
\centering
\begin{tikzpicture}
\node (A) at (0,1) {};
\node (B) at (1,-1) {};
\node (C) at (1,1) {};
\node (D) at (0,-1) {};
\node (i) at (intersection of A--B and D--C) {}; 
\path[->] (D) edge[thick] (C);
\path[-] (B) edge[thick] (i);
\path[->] (i) edge[thick] (A);
\end{tikzpicture}\caption{Positive crossing.}
\end{subfigure}
\begin{subfigure}{0.4\textwidth}
\centering
\begin{tikzpicture}
\centering
\node (A) at (0,1) {};
\node (B) at (1,-1) {};
\node (C) at (1,1) {};
\node (D) at (0,-1) {};
\node (i) at (intersection of A--B and D--C) {}; 
\path[->] (B) edge[thick] (A);
\path[-] (D) edge[thick] (i);
\path[->] (i) edge[thick] (C);
\end{tikzpicture}\caption{Negative crossing.}
\end{subfigure}
\caption{Signs of crossings in a knot diagram.}
\label{Signs of crossings in a knot diagram}
\end{figure}
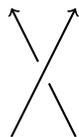
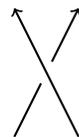
The \emph{writhe} of a knot $k$, denoted by $w(k)$, is the total number of positive crossings minus the total number of negative crossings. The writhe is the same for
either orientation. Suppose we have an oriented link diagram with two components $C_1$ and $C_2$. Let $n_1$ (resp. $n_2$) be the sum of the signs of crossings between a pair of components $C_1$ and $C_2$ of a link in which the over arc belongs to $C_1$ (resp. $C_2$). Then the \emph{linking number}, denoted $\operatorname{lk}(C_1,C_2)$, is equal to $(n_1+n_2)/2$. This is the same labeling used to compute the writhe of a knot, though in this case only crossings that involve both $C_1$ and $C_2$ are counted. The other seven equivalent definitions of the link number can be found in \cite[Section 5D]{Rol76}. Consider a closed ribbon with disjoint boundary components $C_1,C_2$ and the axis of the ribbon $C$. Then the relationship between the linking number and the writhe of $C$ can be expressed by the White formula which was discovered independently by C\v{a}lug\v{a}reanu\cite{Cal61}, Fuller\cite{Ful71}, Pohl\cite{Poh68}, and White\cite{Whi69}:
$$\operatorname{lk}(C_1,C_2) = t(C_1,C_2) + w(C),$$
where $t(C_1,C_2)$ is the number of (full) twists of $C_1$ and $C_2$.

We invoke two important techniques of constructing knots to analyze $\pi_1(K_j)$ ($j\geq 2$).  The first is the twisted Whitehead doubling.

\begin{definition}\label{Def: Whitehead doubling}
Let $K_P$ be a knot in $S^3$ and $V_P$ an unknotted solid torus in $S^3$ with $K_P\subset V_P \subset S^3$. Assume $K_P$ is not contained in a 3-ball of $V_P$. Let $K_C \subset S^3$ be another knot and let $V_C$ be a tubular neighborhood of $K_C$ in $S^3$. A homeomorphism $h: V_P \to V_C$ which maps a meridian of $S^3 \backslash \operatorname{Int}V_P$ onto a longitude of $V_C$ and $K_P$ onto a knot $K_W=h(K_P)$. We say $K_C$ is a \emph{companion} of any knot $K_W$ constructed (up to knot type) in this manner, and $K_W$ is called a \emph{satellite} of $K_C$. If $h$ is \emph{faithful}, meaning that $h$ takes the preferred longitude and meridian of $V_P$, respectively, to the preferred longitude and meridian of $V_C$, consider $K_P \subset V_P$ as in Figure \ref{whiteheaddouble_3_1_untwisted},
we say $K_W$ is an \emph{untwisted Whitehead double} of $K_C$. By unclasping and reconnecting $K_W$ shown in Figure \ref{whiteheaddouble_3_1_untwisted}, we obtain a link, which is the disjoint union of two boundary components of a closed ribbon. The "preferred longitude" means that the link number of those two components is zero. If $K_W$ contains no twists, by the White formula, this is equivalent to say that $K_C$ has writhe zero. Note that a Whitehead doubling only consists of three actions: clasp (\raisebox{-.2\height}{\includegraphics[height=3ex,width=3ex]{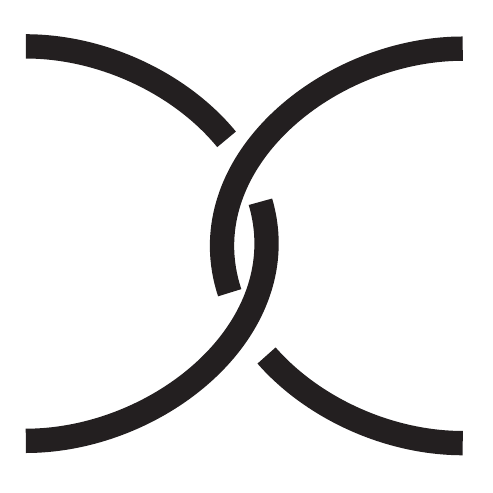}}), twist (\raisebox{-.2\height}{\includegraphics[height=2.8ex,width=10ex]{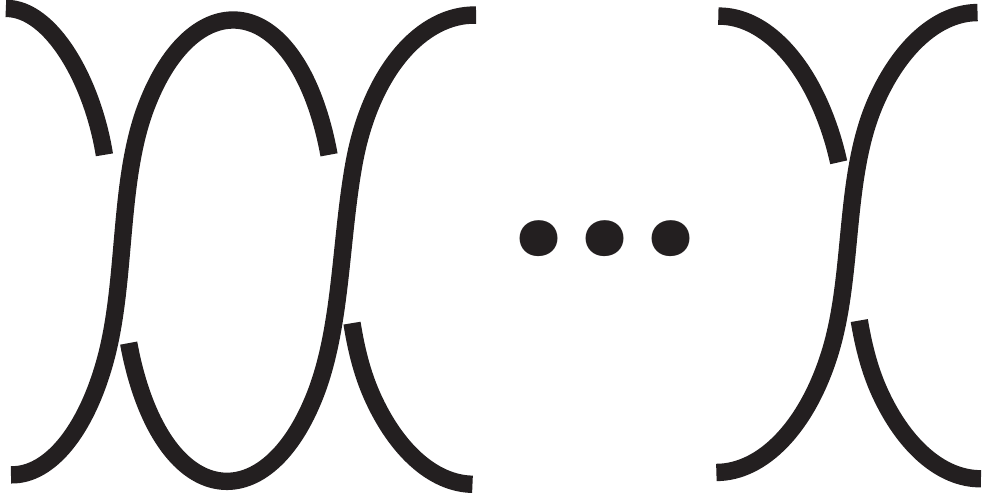}} or \raisebox{-.2\height}{\includegraphics[height=2.8ex,width=10ex]{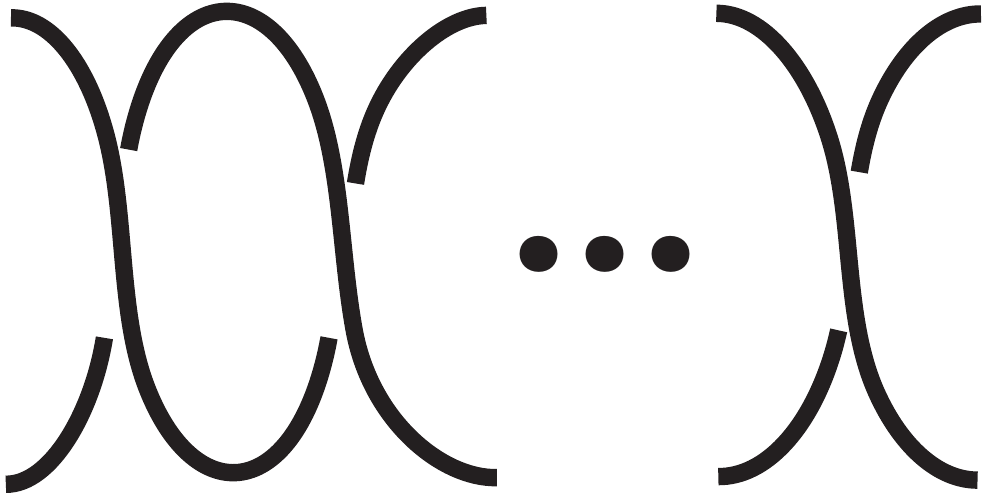}})
and doubling of the crossings (\raisebox{-.2\height}{\includegraphics[height=3.5ex,width=3.5ex]{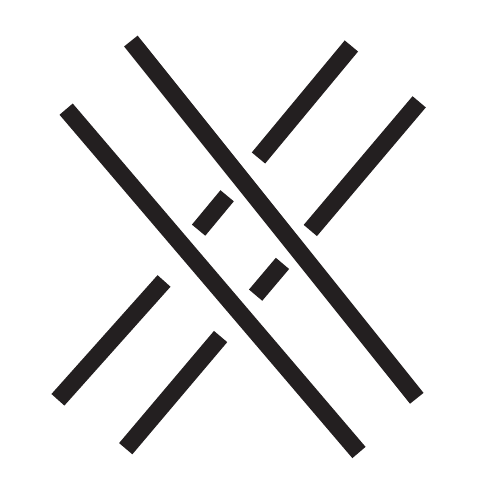}} or \raisebox{-.2\height}{\includegraphics[height=3.5ex,width=3.5ex]{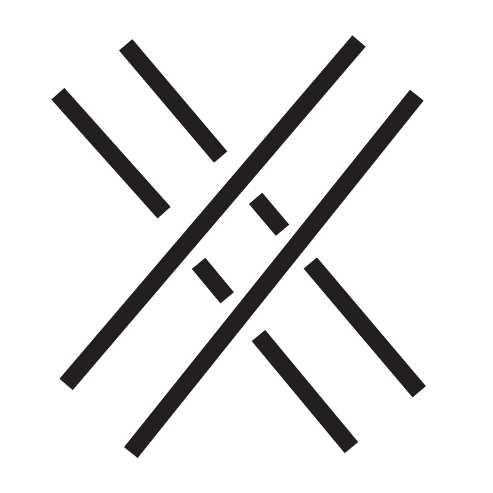}}) of $K_C$. In this situation, the first two do not contribute to the link number. Instead of using the White formula, one can obtain the same conclusion through such an observation. In the notion of Rolfsen \cite[P. 166]{Rol76}, the link number is also known as \emph{the twisting number}. If the linking number is nonzero, then $K_W$ is a \emph{twisted Whitehead double}. For instance, the satellite knot in Figure \ref{whiteheaddouble_3_1} is a 3-twisted Whitehead double of a trefoil knot, where the twisting arises from the writhe of a trefoil knot. In this case, $h$ is not faithful. The pair $(V_P, K_P)$ is called
\emph{pattern} of $K_W$. 
\end{definition}

\begin{figure}[h!]
        \centering
\begin{subfigure}{0.4\textwidth}
       \centering
       \includegraphics[width=0.9\textwidth]{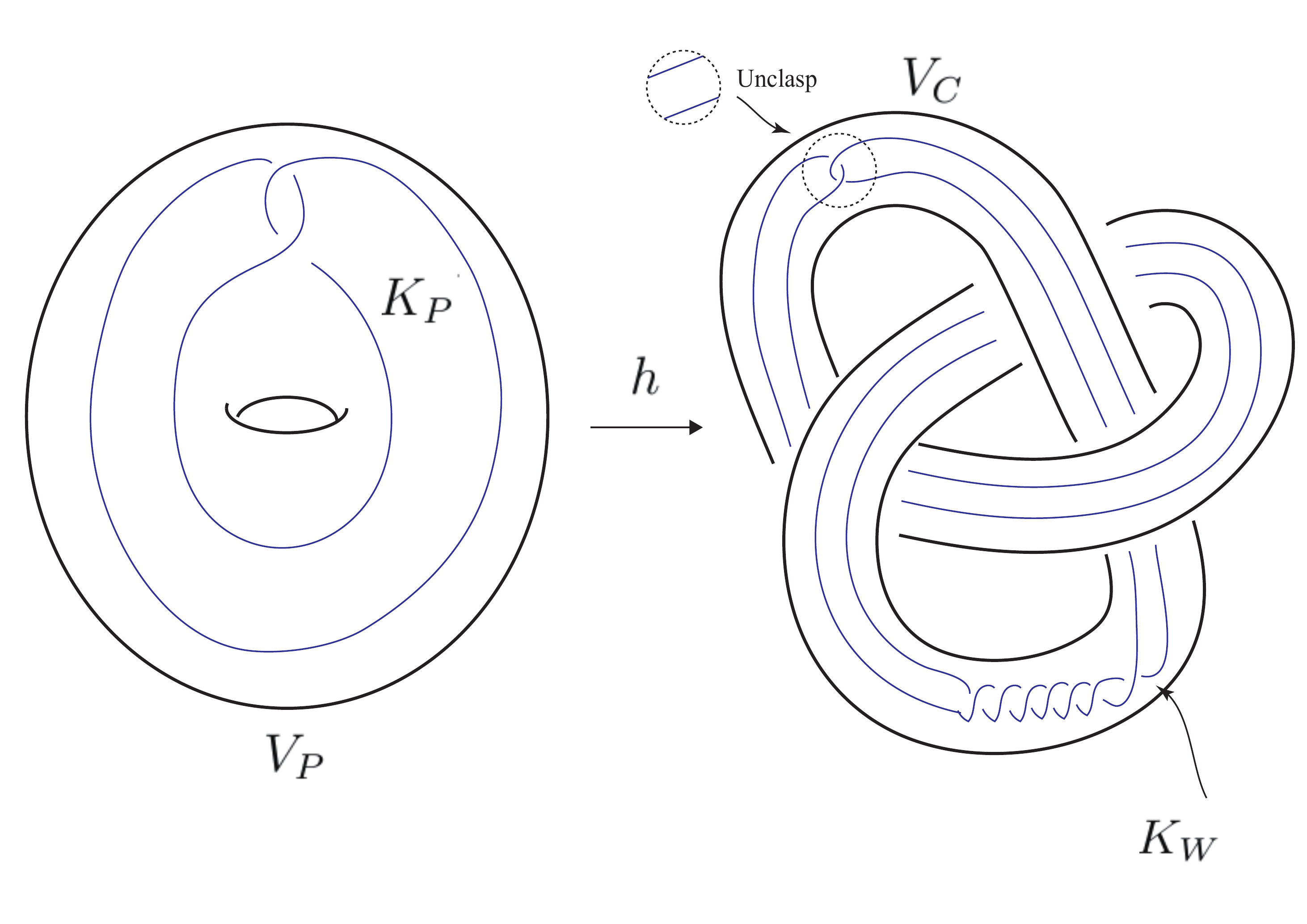}
       \caption{An untwisted Whitehead double of a trefoil knot.}
        \label{whiteheaddouble_3_1_untwisted}
\end{subfigure}\hspace{2em}
\begin{subfigure}{0.4\textwidth}
       \centering
       \includegraphics[width=0.9\textwidth]{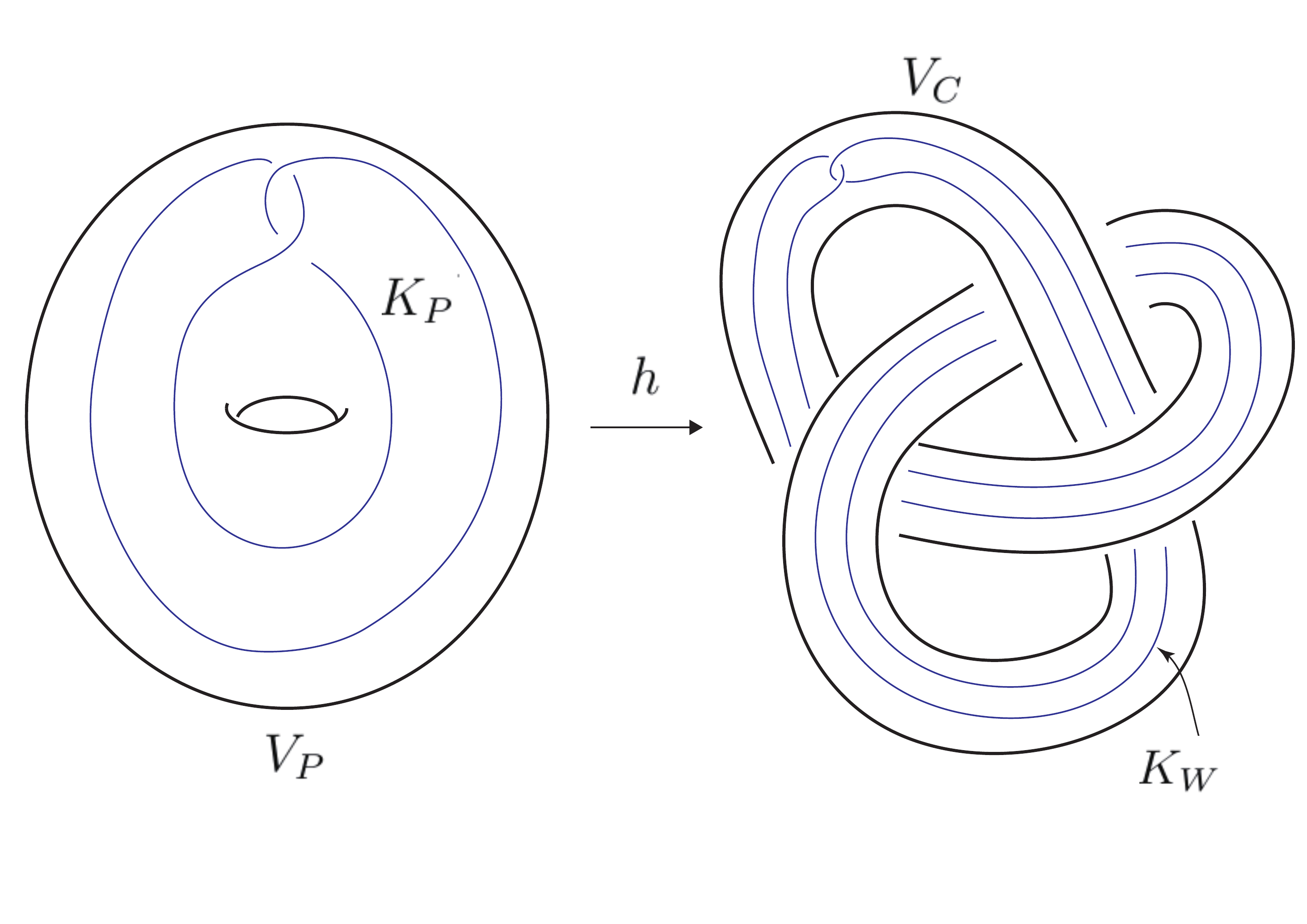}
       \caption{A 3-twisted Whitehead double of a trefoil knot.}
        \label{whiteheaddouble_3_1}
\end{subfigure}
\caption{A knot $K_W$ with a trefoil knot as companion.}
\label{A knot K_W with a trefoil knot as companion}
\end{figure}
The second tool is based on a type of connected sum of a pair of manifolds $(M_{1}^{m},N_{1}^{n})\#(M_{2}^m,N_{2}^{n})$, where $N_{i}^{n}$ is a locally flat submanifold of $M_{i}^{m}$. Treat the above pair as $(S^3, k_1) \# (S^3,k_2)$ where $k_i$ are tame knots. Removing a standard ball pair $(B_i^3,B_i^1)$ from $(S^3,k_i)$ and gluing the resulting pairs by a homeomorphism $h: (\partial B_2^3,\partial B_2^1) \to (\partial B_1^3,\partial B_1^1)$ to form the pair connected sum. For convenience, we use $k_1 \# k_2$ other than pairs of manfolds. See \cite[P. 40]{Rol76} for details. Let $\mathcal{K}_1$ ($=K$) be a knot corresponding to the knot space $K_1$, and denote a twisted Whitehead double of $\mathcal{K}_1$ by $\mathcal{K}_1^{Wh}$. Let $\mathcal{K}_2 = \mathcal{K}_1^{Wh} \# \mathcal{K}_1$. Then an argument similar to \cite[Claim 3]{Gu21} implies that 
$\pi_1(S^3 \backslash \mathcal{K}_2) \cong \pi_1(K_2)$. Likewise, one can further find a knot $\mathcal{K}_3$ on 3rd stage which is a connected sum of a twisted Whitehead double of $\mathcal{K}_2$ and $\mathcal{K}_1$. By iteration, a knot $\mathcal{K}_j$ can be viewed as $\mathcal{K}_{j-1}^{Wh}\# \mathcal{K}_1$ so that $\pi_1(S^3\backslash\mathcal{K}_j)\cong \pi_1(K_j)$. Using Lemma \ref{Lemma:hypoabelian groups}, Sequence (\ref{pi_1 of W^3}) above not being perfectly
semistable is equivalent to $\pi_1(K_j)$ being hypoabelian, thus, our next step is to show that each knot group in Sequence (\ref{knot groups}) is hypoabelian.

\begin{lemma}\label{Lemma: Whitehead link is hypoabelian}
The group of the link complement of the Whitehead link (which is isomorphic to $\pi_1(V_P\backslash K_P)$ as shown in Figure \ref{A knot K_W with a trefoil knot as companion}) is hypoabelian.
\end{lemma}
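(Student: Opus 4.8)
The plan is to exploit the fact that the Whitehead link is a \emph{fibered} link and to convert the resulting fibration of its exterior into a group extension to which the closure properties of hypoabelian groups apply. Write $L = L_1 \cup L_2$ for the Whitehead link and $M = S^3 \setminus N(L)$ for its exterior; under the identification in Figure \ref{A knot K_W with a trefoil knot as companion} (with $L_1$ the core of $S^3 \setminus V_P$ and $L_2 = K_P$) one has $\pi_1(M) \cong \pi_1(V_P \setminus K_P)$. First I would recall that $M$ fibers over $S^1$, say with fiber $F$. Since $M$ has nonempty (toroidal) boundary, the fiber $F$ is a compact connected orientable surface with $\partial F \neq \emptyset$, so $\pi_1(F)$ is free. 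The homotopy exact sequence of the fibration then yields
$$1 \longrightarrow \pi_1(F) \longrightarrow \pi_1(M) \longrightarrow \mathbb{Z} \longrightarrow 1.$$

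With this exact sequence in hand the result is immediate from two facts recalled earlier: free groups (and $\mathbb{Z}$, being abelian) are hypoabelian, and the class of hypoabelian groups is closed under extensions. Applying the latter to the displayed sequence, with normal subgroup $\pi_1(F)$ and quotient $\mathbb{Z}$, shows $\pi_1(M)$ is hypoabelian, and hence so is the isomorphic group $\pi_1(V_P \setminus K_P)$. This argument is the exact link-theoretic analogue of the proof of Proposition \ref{Prop: 9_42 is hypoabelian}: there a fibration of a knot exterior exhibits the commutator subgroup as free, realizing the knot group as free-by-$\mathbb{Z}$, and here the fibration of the two-cusped exterior $M$ realizes $\pi_1(M)$ as free-by-$\mathbb{Z}$ directly.

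The one substantive input, and the main obstacle, is the fiberedness of the Whitehead link together with the identification of a fiber. I would obtain this either by citing the classical fact that the Whitehead link is fibered (its exterior carries a nonempty fibered face in the Thurston norm ball), or by exhibiting a spanning surface for $L$ of small complexity --- a genus-one surface with two boundary circles, that is, a twice-punctured torus, with $\chi = -2$ --- and verifying via Stallings' criterion that it is a fiber. Once a single fibration is produced, no delicate hypothesis is required: in contrast to Proposition \ref{Prop: free product of hypoabelian}, where a compatibility condition on filtrations is essential, here the freeness of $\pi_1(F)$ is automatic from $\partial F \neq \emptyset$, and the closure of hypoabelianity under extensions supplies the rest.
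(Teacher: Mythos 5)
Your proof is correct and follows essentially the same route as the paper: both arguments cite the fiberedness of the Whitehead link exterior, identify the kernel of the epimorphism to $\mathbb{Z}$ with the (free) fundamental group of the fiber surface, and conclude via the closure of hypoabelian groups under extensions.
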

\begin{proof}
Consider a (unique) epimorphism $\phi$ from $\pi_1(V_P\backslash K_P)$ to the infinite cyclic group $\langle t \rangle$
sending each meridian of $V_P$ to $t$. Since the Whitehead link is fibered \cite[Ex. 5, p. 338]{Rol76}, the kernel $\ker \phi$ of $\phi$ is isomorphic to the fundamental group of the fiber surface, which is a free group of finite rank, thus, hypoabelian. Then
$\pi_1(V_P\backslash K_P)$ is a ension of two hypoabelian groups, hence, hypoabelian.
\end{proof}

\begin{lemma}\label{Lemma: Satellite knot is hypoabelian}
Let $K_C$ be a non-trivial knot, $K_W$ be a satellite knot and $(V_P,K_P)$ be the pattern. Suppose the knot group of $K_C$ and $\pi_1(V_P \backslash K_P)$ are hypoabelian, and that the Alexander polynomial of $K_W$ is nontrivial. Then the knot group of $K_W$ is hypoabelian.
\end{lemma}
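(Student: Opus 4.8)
The plan is to exhibit the knot group $G=\pi_1(S^3\setminus K_W)$ as an amalgamated product over the companion torus and then show it contains no nontrivial perfect subgroup. Since $K_W$ is a satellite with companion $K_C$ and pattern $(V_P,K_P)$, the torus $T=\partial V_C$ splits the exterior of $K_W$ into the pattern piece $Y=V_C\setminus\operatorname{Int}N(K_W)\cong V_P\setminus\operatorname{Int}N(K_P)$ and the companion exterior $X_C=S^3\setminus\operatorname{Int}N(K_C)$. Because $K_C$ is nontrivial and $K_P$ is not contained in a ball of $V_P$, the torus $T$ is incompressible in both pieces, so van Kampen's theorem gives
\begin{equation}
G=\pi_1(V_P\setminus K_P)\ast_{\mathbb{Z}^2}\pi_1(S^3\setminus K_C)=A\ast_C B,
\end{equation}
where $A=\pi_1(V_P\setminus K_P)$ and $B=\pi_1(S^3\setminus K_C)$ are hypoabelian by hypothesis and $C=\pi_1(T)\cong\mathbb{Z}^2$ is the peripheral subgroup of the companion. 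I want to prove that the transfinite derived series of $G$ reaches $1$; equivalently, that $G$ has no nontrivial perfect subgroup.

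The engine I would use mirrors the proof of Proposition \ref{Prop: free product of hypoabelian}. It suffices to produce a normal subgroup $N$ of $G$ with (i) $N\cap C=1$ and (ii) $Q:=G/N$ hypoabelian. Indeed, normality upgrades (i) to $N\cap gCg^{-1}=g(N\cap C)g^{-1}=1$ for every $g$, so $N$ acts freely on the edge set of the Bass--Serre tree of the splitting. By the structure theorem for groups acting on trees with trivial edge stabilizers (the same input of \cite{Neu49} used in Proposition \ref{Prop: free product of hypoabelian}), $N$ is then a free product of conjugates of subgroups of $A$ and $B$ together with a free group. Each such factor is a subgroup of a hypoabelian group, hence hypoabelian, and free groups are hypoabelian, so $N$ is hypoabelian. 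As $G$ is an extension of the hypoabelian group $N$ by the hypoabelian group $Q$, $G$ is hypoabelian.

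To build $Q$ I would pass to the infinite cyclic cover. The abelianization $\phi\colon G\twoheadrightarrow\mathbb{Z}$ has kernel $G'=\pi_1(\widetilde{X_W})$, and since hypoabelianity is closed under extensions and under passage to subgroups, $G$ is hypoabelian if and only if $G'$ is. The Alexander module $G'/G''=H_1(\widetilde{X_W})$ has order $\Delta_{K_W}(t)$, and the hypothesis $\Delta_{K_W}\neq 1$ says precisely that this module is nonzero, so that $G'$ is not perfect. What I would actually exploit is the sharper consequence that $\Delta_{K_W}\neq 1$ forces the peripheral group $C=\langle\mu_C,\lambda_C\rangle$ to embed into a suitable solvable (hence hypoabelian) quotient of $G$, which is exactly what furnishes the required $N=\ker(G\to Q)$ with $N\cap C=1$. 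Here the fibered/free structure of the pattern group from Lemma \ref{Lemma: Whitehead link is hypoabelian} is used to locate the longitude $\lambda_C$ inside the free commutator subgroup and to bound its depth in the derived series. The hypothesis is sharp: for the untwisted Whitehead double $\Delta_{K_W}=1$, the module vanishes, $G'$ becomes a nontrivial perfect subgroup, and $G$ fails to be hypoabelian.

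The hard part will be exactly this last point, namely that the rank-two peripheral torus $C\cong\mathbb{Z}^2$ survives in a hypoabelian quotient. Note that the amalgamation is over $\mathbb{Z}^2$ rather than over a cyclic subgroup, so Proposition \ref{Prop: free product of hypoabelian} does not apply directly; phrased on the Bass--Serre tree, the genuine obstruction is a perfect subgroup $P\le G$ acting \emph{of general type}. The remaining cases are disposed of cheaply and I would record them first: an elliptic $P$ lies in a conjugate of $A$ or $B$ and is therefore hypoabelian, hence trivial; a parabolic $P$ is an increasing union of edge stabilizers, hence abelian, hence trivial; and a lineal or focal $P$ admits a nontrivial homomorphism to $\mathbb{Z}$ or to the infinite dihedral group, contradicting perfectness. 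Ruling out the general-type case is equivalent to achieving $N\cap C=1$, and it is precisely there that the Alexander polynomial hypothesis, the explicit satellite/infinite-cyclic-cover geometry, and the position of the longitude in the Alexander module must be combined.
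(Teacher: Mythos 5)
Your setup is fine as far as it goes: the torus decomposition $G=A\ast_{C}B$ with $C\cong\mathbb{Z}^{2}$, the observation that Proposition \ref{Prop: free product of hypoabelian} does not apply because the edge group is not cyclic, and the reduction to finding a normal $N\trianglelefteq G$ with $N\cap C=1$ and $G/N$ hypoabelian are all correct framing. But the proof stops exactly where the lemma begins. The entire content of the statement is how the hypothesis $\Delta_{K_W}\neq 1$ is converted into group-theoretic information, and your proposal asserts without argument that $\Delta_{K_W}\neq 1$ ``forces the peripheral group $C=\langle\mu_C,\lambda_C\rangle$ to embed into a suitable solvable quotient of $G$,'' then explicitly defers this as ``the hard part.'' That claim is far from obvious and is not what the hypothesis actually yields: both $\lambda_C$ and (when the winding number of the pattern is zero, e.g.\ for Whitehead doubles, which is the case needed in this paper) $\mu_C$ lie in $G^{(1)}$, and $\lambda_C$ always lies in $G^{(2)}$, so an entire copy of $\mathbb{Z}^{2}$ would have to survive deep in the derived series of a solvable quotient; no mechanism for this is offered. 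A secondary gap: your ``cheap'' disposal of the horocyclic case treats an increasing union of subgroups of vertex stabilizers as automatically hypoabelian, but hypoabelianity is not closed under directed unions (there exist perfect locally free groups), so even that case is not actually disposed of.

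The paper takes a different and more concrete route that you may want to compare against. Rather than seeking a normal complement to the $\mathbb{Z}^{2}$ edge group, it descends the derived series: using the Reidemeister--Schreier/Burde--Zieschang presentation, $G_{W}^{(1)}$ is an amalgamated product $\mathcal{R}\ast_{F}\bigl(\prod_{\varrho}^{\ast}t^{\varrho}G_{C}^{(1)}t^{-\varrho}\bigr)$ where $F$ is free on conjugates of the longitude $\lambda$. The Alexander polynomial hypothesis enters via Cochran's result (\cite[Prop.~12.5]{Coc04}) that $\lambda$ vanishes by the third derived subgroup, so the amalgamating subgroup dies there and $G_{W}^{(3)}$ becomes an honest free product $\bigl(\prod_{\varrho}^{\ast}\lambda^{\varrho}H^{(3)}\lambda^{-\varrho}\bigr)\ast\bigl(\prod_{\varrho}^{\ast}\lambda^{\varrho}G_{C}^{(3)}\lambda^{-\varrho}\bigr)$ of subgroups of the hypothesized hypoabelian groups; closure of hypoabelianity under free products and under extension by the solvable group $G_{W}/G_{W}^{(3)}$ finishes the argument. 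In short, the paper kills the peripheral subgroup at a finite stage of the derived series instead of trying to preserve it in a quotient; your proposal would need an actual proof of the survival claim (or a switch to the paper's mechanism) to close the gap.
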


\begin{proof}
Denote the various groups $\pi_1(S^3\backslash K_C)$, $\pi_1(S^3 \backslash K_W)$ and $\pi_1(V_P\backslash K_P)$ by $G_C$, $G_W$ and $H$, respectively. By Seifert-van Kampen theorem, $G_W = H \ast_{\pi_1(\partial V_P)} G_C$. The amalgamated free product of $H$ and $G_C$ with subgroups $\pi_1(\partial V_P) \subset H$ and $\pi_1(\partial V_C)\subset G_C$ identified via a homeomorphsim $h$ as described in Definition \ref{Def: Whitehead doubling}.

Regard $H$ as the fundamental group of the complement of a 2-component link $K_P \cup m_P$, where $m_P$ is a meridian of $V_P$. Let $\lambda,t$ be the homotopy class of the longitude $l_P$ and meridian $m_P$ of $V_P$, respectively, in $H$. Using a Wirtinger presentation after replacing all meridional generators of $m_P$ except $t$ by element of the first commutator subgroup $H^{(1)}$ we obtain 
$$H = \langle t, \tilde{u}_i, \lambda | \tilde{R}_{j}(\tilde{u}_{i}^{t^\nu},\lambda) \rangle,$$
where $\tilde{u}_{i}\in H^{(1)}$, $\tilde{u}_{i}^{t^\nu}=  t^\nu \tilde{u}_{i} t^{-\nu}$, $\nu \in \mathbb{Z}$, $\tilde{R}_{j}(\tilde{u}_{i}^{t^\nu},\lambda)$ represents a  relator list of this Wirtinger presentation; $i,j$ are finite indices. The augmentation $\phi: G_W \to \mathbb{Z}$ induces a homomorphism $\phi: H \to \mathbb{Z}$ by sending $t\to 1$ and $\lambda,\tilde{u}_i \to 0$. Let $\mathcal{R}$ be $\ker \phi$. Apply Reidemeister-Schreier method to see that the presentation of the commutator subgroup $G_{W}^{(1)}$ of $G_W$ can be written as a free product with amalgamation:
\begin{equation}
G_{W}^{(1)} = \mathcal{R} \ast_{F} \left(\prod_{\varrho}^{\ast} t^{\varrho} G_{C}^{(1)} t^{-\varrho} \right),
\end{equation}
where $G_{C}^{(1)}$ is the commutator subgroup of $G_C$ and $\varrho$ ranges over $\mathbb{Z}$. When the winding number $n$ of $K_P$ in $V_P$ is zero, $F$ is a free group of infinite rank generated by $\lambda^{t^\varrho} = t^\varrho \lambda t^{-\varrho}$. Otherwise, $F$ is a free group of rank $n$ and $\varrho = 0, 1, \dots, n-1$.
A detailed proof can be found in \cite[Section 4.12]{BZ03}. By presentations (7) and (8) in \cite[p. 63]{BZ03}, we obtain a presentation of $\mathcal{R}$
$$\mathcal{R} = \langle \tilde{u}_{i}^{t^\varrho}, \lambda^{t^\varrho} | \tilde{R}_{j}^{t^\varrho}(\tilde{u}_{i}^{t^\nu},\lambda)\rangle,$$
where $\tilde{R}_{j}^{t^\varrho}(\tilde{u}_{i}^{t^\nu},\lambda)=  t^\varrho \tilde{R}_{j}(\tilde{u}_{i}^{t^\nu},\lambda) t^{-\varrho}$ is a relator. The second group in the amalgamation $G_{W}^{(1)}$ has a presentation (derived from the presentation of $G_C^{(1)}$ \cite[Presentations (6)\&($6\mu$), p. 63]{BZ03})
$$\prod_{\varrho}^{\ast} t^{\varrho} G_{C}^{(1)} t^{-\varrho} = \langle \hat{u}_{k}^{t^\varrho}, \lambda^{t^\varrho} | \hat{R}_{\iota}^{t^\varrho}(\hat{u}_{k}^{t^\varrho}), (\lambda^{t^\varrho})^{-1}\hat{w}^{t^\varrho}(\hat{u}_{k}^{t^\varrho}) \rangle,$$
where $\hat{u}_k, \hat{w}(\hat{u}_{k}) \in G_{C}^{(1)}$,  $\hat{u}_{k}^{t^\varrho} = t^\varrho \hat{u}_{k} t^{-\varrho}$, $\hat{R}_{\iota}^{t^\varrho}(\hat{u}_{k}^{t^\varrho})=  t^\varrho \hat{R}_{\iota}(\hat{u}_{k}^{t^\varrho}) t^{-\varrho}$ represents a relator,
and $\hat{w}^{t^\varrho}(\hat{u}_{k}^{t^\varrho}) = t^\varrho \hat{w}(\hat{u}_{k}^{t^\varrho})t^{-\varrho}$; $k$ is a finite index.

To see that $G_{W}$ is hypoabelian, it suffices to show that the third derived series of $G_{W}$ is hypoabelian. Note that the derived series of $H$ (resp. $G_{C}$) does not stabilize at the third commutator subgroups $H^{(3)}$ (resp. $G_{C}^{(3)}$). In fact, \cite[Prop. 12.5]{Coc04} implies that $\lambda$ vanishes at $G_{W}^{(3)}$. Dropping $\lambda$ and using generators $\tilde{v}_{\kappa} \in H^{(3)}$ and $\hat{v}_{\tau} \in G_{C}^{(3)}$ (while keeping $\lambda$ in $G_{W}^{(2)}$), we obtain 
$$G_{W}^{(3)} =  \langle \tilde{v}_{\kappa}^{\lambda^\varrho}, \hat{v}_{\tau}^{\lambda^\varrho} | \tilde{r}_{\zeta}^{\lambda^\varrho}(\tilde{v}_{\kappa}^{\lambda^\mu}), \hat{r}_{\varsigma}^{\lambda^\varrho}(\hat{v}_{\tau}^{\lambda^\mu})\rangle,$$
where $\mu \in \mathbb{Z}$,  $\tilde{v}_{\kappa}^{\lambda^\varrho} =\lambda^\varrho  \tilde{v}_{\kappa} \lambda^{-\varrho}$,  $\tilde{r}_{\zeta}^{\lambda^\varrho}(\tilde{v}_{\kappa}^{\lambda^\mu}) = \lambda^{\varrho}\tilde{r}_{\zeta}(\tilde{v}_{\kappa}^{\lambda^\mu})\lambda^{-\varrho}$ represents a relator, $\hat{v}_{\tau}^{\lambda^\mu} = \lambda^\varrho  \hat{v}_{\tau} \lambda^{-\varrho}$ and $\hat{r}_{\varsigma}^{\lambda^\varrho}(\hat{v}_{\tau}^{\lambda^\mu}) = \lambda^{\varrho}\hat{r}_{\varsigma}(\hat{v}_{\tau}^{\lambda^\mu})\lambda^{-\varrho}$ represents a relator; $\kappa, \tau, \zeta$ and $\varsigma$ are finite indices. Since $G_{C}^{(3)}$,
$\lambda G_{C}^{(3)} \lambda^{-1}$, $\dots$, have disjoint sets of generators and so do $H^{(3)}$, $\lambda H^{(3)} \lambda^{-1}$, $\dots$, $G_{W}^{(3)}$ can be rewritten as a free product of free products
$$G_{W}^{(3)} = \left(\prod_{\varrho}^{\ast} \lambda^{\varrho} H^{(3)} \lambda^{-\varrho}\right)\ast \left(\prod_{\varrho}^{\ast} \lambda^{\varrho} G_{C}^{(3)} \lambda^{-\varrho}\right).$$
It follows from the hypotheses that $H^{(3)}$ and $G_{C}^{(3)}$ are hypoabelian. The free product is hypoabelian, for the conjugates of $H^{(3)}$ and $G_{C}^{(3)}$ are hypoabelian.
\end{proof}

\begin{remark}
\begin{enumerate}
\item The requirement that the Alexander polynomial of $K_W$ is nontrivial in Lemma \ref{Lemma: Satellite knot is hypoabelian} cannot be omitted. Removing it might cause counterexamples such as the untwisted Whitehead double of any nontrivial knot. It's well-known that the knot group of an untwisted Whitehead double of any nontrivial knot is perfect.

\item \label{Counterexample} That the Alexander polynomial of $K_W$ is nontrivial alone cannot guarantee that the knot group of $K_W$ is hypoabelian. Here is an example suggested by Professor Ian Agol \cite{Ago17}. Consider the $(p,q)$ cable of $K$, denoted $K_{p,q}$, is a statellite knot with pattern the $(p,q)$ torus knot, $T_{p,q}$. More precisely, $K_{p,q}$ is the image of a torus knot living on the boundary of a tubular neighborhood of $K$. Thus, $p$ is the number of times $K_{p,q}$ traverses the longitudinal direction of $K$, and $q$ is the meridional number. By \cite[Prop. 8.23, p. 121]{BZ03}, the Alexander polynomial of a cable knot is determined by
$$\Delta_{K_{p,q}}(t) = \Delta_{T_{p,q}}(t)\cdot \Delta_{K}(t^p).$$
Let $K$ be an untwisted Whitehead double of a nontrivial knot. Since $K$ has trivial Alexander polynomial,  $G^{(1)} = G^{(2)}$. Therefore, when one takes a cable of this knot, its derived series will agree with that of torus knot $T_{p,q}$. Because the knot group of any $T_{p,q}$ is hypoabelian, the transfinite derived series $G_{p,q}^{(i)}$ ($i \geq 0$) of the knot group of $K_{p,q}$ terminates at the commutator subgroup of the pattern knot at $G_{p,q}^{(\omega)}$, where $\omega$ is the first ordinal. 

\item It's impossible to weaken the hypothese that the knot group of $K_C$ and $\pi_1(V_P \backslash K_P)$ are hypoabelian in Lemma \ref{Lemma: Satellite knot is hypoabelian} to the Alexander polynomials of $K_C$ and $K_P$ are nontrivial. One emulates the example in (\ref{Counterexample}) above to produce a cable of a cable of an untwisted Whitehead double of an nontrivial knot such that the patterns are torus knots.
\end{enumerate}
\end{remark}

The following result answers a question posed in \cite{Gu17} and  \cite[Question 4]{Gu20} in affirmative.
\begin{corollary}\label{Corollary: whitehead double is hypoabelian}
Let $K_C$ be a non-trivial knot. If $\pi_1(S^3\backslash K_C)$ is hypoabelian, then the knot group of a twisted Whitehead double $K_W$ of $K_C$ is hypoabelian.
\end{corollary}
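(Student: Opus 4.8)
The plan is to apply Lemma \ref{Lemma: Satellite knot is hypoabelian} directly, viewing the twisted Whitehead double $K_W$ as a satellite knot whose companion is $K_C$ and whose pattern $(V_P, K_P)$ is the Whitehead pattern inside an unknotted solid torus. To do this I must verify the three hypotheses of that lemma: first, that $\pi_1(S^3 \backslash K_C)$ is hypoabelian; second, that $\pi_1(V_P \backslash K_P)$ is hypoabelian; and third, that the Alexander polynomial of $K_W$ is nontrivial. The first hypothesis is given in the statement. The second follows immediately from Lemma \ref{Lemma: Whitehead link is hypoabelian}, since the pattern complement $V_P \backslash K_P$ is precisely the Whitehead link complement, whose group we have just shown to be hypoabelian.

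The crux of the argument, therefore, is the third hypothesis: showing that a \emph{twisted} Whitehead double has nontrivial Alexander polynomial. This is exactly the point where the adjective "twisted" becomes essential, as the remark following Lemma \ref{Lemma: Satellite knot is hypoabelian} warns that the untwisted double (having trivial Alexander polynomial) yields a perfect, hence non-hypoabelian, knot group. The plan is to use the satellite formula for the Alexander polynomial: for a satellite $K_W$ with companion $K_C$ and pattern of winding number $n$, one has $\Delta_{K_W}(t) = \Delta_{K_P}(t) \cdot \Delta_{K_C}(t^n)$, where $\Delta_{K_P}$ denotes the Alexander polynomial of the pattern knot $K_P$ inside the solid torus. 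For a Whitehead double the winding number is $n = 0$, so the factor $\Delta_{K_C}(t^0) = \Delta_{K_C}(1) = \pm 1$ contributes only a unit, and thus $\Delta_{K_W}(t)$ equals (up to units) the Alexander polynomial of the pattern $K_P$ in $V_P$. For a $k$-twisted Whitehead double with $k \neq 0$, this pattern polynomial is $-kt + (2k+1) - kt^{-1}$ (up to normalization), which is nontrivial precisely when $k \neq 0$; the twisting number $k$ here is the linking number discussed before Definition \ref{Def: Whitehead doubling}, which is nonzero by the definition of a twisted double.

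Having established all three hypotheses, the conclusion that $\pi_1(S^3 \backslash K_W)$ is hypoabelian follows at once from Lemma \ref{Lemma: Satellite knot is hypoabelian}. The main obstacle I anticipate is the bookkeeping around the Alexander polynomial computation: one must be careful that the nontriviality of the double's Alexander polynomial is genuinely a consequence of the nonzero twisting (equivalently nonzero linking number) and does not depend on any special property of $K_C$ beyond its companion role. The White formula and the linking-number discussion preceding Definition \ref{Def: Whitehead doubling} provide exactly the bridge needed: the nonzero twisting number guarantees a nonzero linking number, which in turn forces the pattern's Alexander polynomial to be a nonconstant Laurent polynomial, giving $\Delta_{K_W}(t) \neq \pm 1$. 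Once this is in hand the proof is a short invocation of the preceding lemmas.
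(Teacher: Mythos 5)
Your proposal is correct and follows essentially the same route as the paper: the paper's proof likewise feeds Lemma~\ref{Lemma: Whitehead link is hypoabelian} (for the pattern complement) and the given hypothesis on $K_C$ into Lemma~\ref{Lemma: Satellite knot is hypoabelian}, disposing of the remaining hypothesis by noting that the nontriviality of the Alexander polynomial of a twisted Whitehead double is \cite[Ex.~7, p.~166]{Rol76}. Your expanded justification of that last point --- the winding-number-zero satellite formula together with the twist-knot polynomial $-kt+(2k+1)-kt^{-1}$, nontrivial exactly when the twisting/linking number $k$ is nonzero --- is precisely the content of that exercise spelled out.
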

\begin{proof}
An easy exercise in \cite[Ex. 7, p. 166]{Rol76} shows that the Alexander polynomial of $K_W$ is nontrivial. Apply Lemma \ref{Lemma: Whitehead link is hypoabelian} and Lemma \ref{Lemma: Satellite knot is hypoabelian}.
\end{proof}

\begin{lemma}\label{Lemma: Connected sum of knots is hypoabelian}
Let $K_1$ and $K_2$ be knots and $G_1$ and $G_2$ be the corresponding knot groups. If $G_i$ is hypoabelian, then the 
knot group of $K_1 \# K_2$ is hypoabelian.
\end{lemma}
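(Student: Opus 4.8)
The plan is to recognize the group of $K_1 \# K_2$ as a \emph{split} amalgamated free product and then quote Proposition~\ref{Prop: splitting is hypoabelian}. First I would recall the classical van Kampen decomposition of a connected-sum complement: forming $K_1 \# K_2$ deletes a ball pair from each $(S^3, K_i)$ and reglues along the boundary $2$-sphere, which meets the knot in two points, so the gluing locus is an annulus with fundamental group $\mathbb{Z}$. This exhibits
$$\pi_1\bigl(S^3 \setminus (K_1 \# K_2)\bigr)\;\cong\; G_1 \ast_{\langle m\rangle} G_2,$$
where the amalgamated subgroup $\langle m\rangle \cong \mathbb{Z}$ is generated by a meridian $m$ that is simultaneously a meridian of $K_1$ and of $K_2$ (cf. \cite[p.~40]{Rol76} or \cite{BZ03}).

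Next I would check that this amalgamation is split in the sense of the definition preceding Proposition~\ref{Prop: splitting is hypoabelian}, i.e., that the inclusion $\langle m\rangle \hookrightarrow G_1$ admits a retraction. For any knot $K$, Alexander duality gives $G/G^{(1)} \cong H_1(S^3\setminus K)\cong \mathbb{Z}$ with the meridian mapping to a generator; hence the abelianization $G_1 \twoheadrightarrow G_1/G_1^{(1)} \cong \langle m\rangle$ restricts to the identity on $\langle m\rangle$ and so is the required retraction. Thus $G_1$ is a split extension of $\langle m\rangle$, and $G_1 \ast_{\langle m\rangle} G_2$ is a split amalgamated free product.

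Since $G_1$ and $G_2$ are hypoabelian by hypothesis, Proposition~\ref{Prop: splitting is hypoabelian} (Howie's Theorem~E) then immediately yields that $G_1 \ast_{\langle m\rangle} G_2$, hence the knot group of $K_1 \# K_2$, is hypoabelian. I note that one could instead route through Proposition~\ref{Prop: free product of hypoabelian}: because both meridians generate their respective $\mathbb{Z}$-abelianizations, the cosets $m\,G_i^{(1)}$ have infinite (hence equal) order modulo the commutator subgroups, so the filtrations are automatically compatible. However, the split-amalgamation argument is cleaner and sidesteps any compatibility bookkeeping.

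I do not expect a serious obstacle here; the only point requiring care is pinning down that the amalgamating subgroup is exactly a single meridian (a cyclic $\mathbb{Z}$), so that the abelianization furnishes the splitting. Once that structural identification is in place, the hypoabelian conclusion is a direct citation of the preceding propositions.
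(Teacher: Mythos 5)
Your proposal is correct and follows essentially the same route as the paper: decompose $\pi_1(S^3\setminus(K_1\# K_2))$ as $G_1\ast_{\langle m\rangle}G_2$ amalgamated over the meridian and invoke Proposition~\ref{Prop: splitting is hypoabelian}. You in fact supply a detail the paper leaves implicit --- that the abelianization $G_1\twoheadrightarrow G_1/G_1^{(1)}\cong\langle m\rangle$ furnishes the required splitting --- which is a welcome clarification rather than a deviation.
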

\begin{proof}
Note that $\pi_1(S^3 \backslash K_1 \# K_2)$ can be written as a free product of $G_1$ and $G_2$ amalgmating an infinite cyclic group generated by 
the homotopy class of meridian of $K_1$ and $K_2$. The result follows readily from Proposition \ref{Prop: splitting is hypoabelian}. 
\end{proof}

\begin{proposition}\label{Prop: W3 x [0,1) is not pseudo-collarable}
$W^3 \times [0,1)$ is not peripherally perfectly $\pi_1$-semistable at infinity, hence, not pseudo-collarable.
\end{proposition}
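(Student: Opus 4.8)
The plan is to apply Lemma \ref{Lemma:hypoabelian groups} to the sequence of knot groups (\ref{knot groups}), which has already been shown to be pro-isomorphic to the defining pro-$\pi_1$ sequence (\ref{pi_1 of W^3}). To invoke that lemma I need three things: the bonding homomorphisms $\mu_j$ are surjective, they are non-injective, and every $\pi_1(K_j)$ is hypoabelian. Surjectivity is a consequence of semistability: since $W^3\times[0,1)$ is inward tame and (being inward tame) peripherally locally connected at infinity, Lemma \ref{Lemma: inward tameness implies semistable} shows the relevant neighborhoods are $\partial_M Q$-$\pi_1$-semistable, so after passing to a subsequence the $\mu_j$ may be taken to be epimorphisms. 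Non-injectivity comes from the geometry of the Bing-type construction: the knots $\mathcal{K}_j$ become genuinely more complicated at each stage, so the sequence of knot groups cannot stabilize, while the limit $W^3$ is contractible; I would record this directly from the construction in \cite{Gu21}. One of the two obstacles I anticipate is precisely extracting this non-injectivity cleanly from the construction, since a stable (semistable and pro-monomorphic) sequence would automatically be perfectly semistable and would sink the whole argument. Granting these two facts, the remaining weight of the proof rests on hypoabelianity.

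To establish hypoabelianity of each $\pi_1(K_j)$ I would argue by induction on $j$, using the iterative description $\mathcal{K}_j = \mathcal{K}_{j-1}^{Wh}\#\mathcal{K}_1$. For the base case, $\pi_1(K_1)\cong\pi_1(S^3\backslash K)$ with $K$ chosen to be a fibered knot, so Proposition \ref{Prop: 9_42 is hypoabelian} gives that $\pi_1(K_1)$ is hypoabelian. For the inductive step, assume $\pi_1(K_{j-1})$, the knot group of $\mathcal{K}_{j-1}$, is hypoabelian. Since $\mathcal{K}_{j-1}$ is non-trivial, Corollary \ref{Corollary: whitehead double is hypoabelian} applies and shows that the twisted Whitehead double $\mathcal{K}_{j-1}^{Wh}$ again has hypoabelian knot group; the nontriviality of the Alexander polynomial required there is automatic for a twisted double, and checking that this hypothesis genuinely persists at every stage is the second delicate point I would watch. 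Taking the connected sum with $\mathcal{K}_1$ and applying Lemma \ref{Lemma: Connected sum of knots is hypoabelian}, with both summands having hypoabelian knot groups, yields that $\pi_1(S^3\backslash\mathcal{K}_j)\cong\pi_1(K_j)$ is hypoabelian, completing the induction.

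With surjectivity, non-injectivity, and hypoabelianity all in hand, Lemma \ref{Lemma:hypoabelian groups} shows that the knot-group sequence (\ref{knot groups}) is not perfectly semistable. Because perfect semistability is invariant under pro-isomorphism and (\ref{knot groups}) is pro-isomorphic to (\ref{pi_1 of W^3}), the sequence (\ref{pi_1 of W^3}) is not perfectly semistable either; since this property is independent of the choice of base ray and cofinal system of neighborhoods, $W^3\times[0,1)$ fails to be peripherally perfectly $\pi_1$-semistable at infinity. As this is exactly Condition (\ref{condition b}) of Theorem \ref{Th: Characterization Theorem}, whose necessity is unconditional in the relevant dimension, $W^3\times[0,1)$ cannot be pseudo-collarable, which is the assertion of the proposition.
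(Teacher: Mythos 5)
Your proposal follows essentially the same route as the paper: establish hypoabelianity of the knot groups in Sequence (\ref{knot groups}) by iterating Lemma \ref{Lemma: Whitehead link is hypoabelian}, Corollary \ref{Corollary: whitehead double is hypoabelian} and Lemma \ref{Lemma: Connected sum of knots is hypoabelian}, obtain semistability from Lemma \ref{Lemma: inward tameness implies semistable}, and conclude via Lemma \ref{Lemma:hypoabelian groups} and the necessity direction of Theorem \ref{Th: Characterization Theorem}. Your explicit induction on $j$ and your flagging of the non-injectivity of the bonding maps (which the paper leaves implicit in the construction) are only presentational refinements of the same argument.
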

\begin{proof}
Iterating Lemmas \ref{Lemma: Whitehead link is hypoabelian}, \ref{Lemma: Connected sum of knots is hypoabelian}, Corollary \ref{Corollary: whitehead double is hypoabelian} and the hereditary of hypoabelianism under group extension for $\pi_1(K_i)$'s in Sequence (\ref{knot groups}) shows that Sequences (\ref{pi_1 of quotient space of W^3}) and 
(\ref{pi_1 of W^3}) are pro-isomorphic to a sequence of hypoabelian groups. Furthermore, Lemma \ref{Lemma: inward tameness implies semistable} or a direct observation ensures that Sequence (\ref{pi_1 of W^3}) is semistable.  However, Lemma \ref{Lemma:hypoabelian groups} shows that the perfectness fails. Applying the necessity Theorem \ref{Th: Characterization Theorem} completes the proof.
\end{proof}

Let $D$ be a link diagram, and $x$ a crossing of $D$. Associated to $D$ and $x$ are two
link diagrams, each with one fewer crossing than $D$, called the $A$-\emph{resolution} and
$B$-\emph{resolution} of the crossing. See Figure \ref{crossing resolution}. A knot or link diagram is \emph{A-adequate} if, when all the crossings are resolved using $A$-resolution, each resulting simple closed curve runs through the remnants of each crossing at most once. $B$\emph{-adequacy} is
defined similarly using the $B$-resolution at each crossing. A diagram is \emph{adequate} if it is both
$A$- and $B$-adequate. It follows from a direct observation or \cite[Lemma 4]{Pas17} that adequacy is preserved under the "taking parallels" action such as the doubling action in a Whitehead double. An adequate knot is a generalization of an alternating knot. Many nice topological and geometric properties of adequate knots can be found in \cite{Abe09, FKP11, FKP13,RT88}.
\begin{figure}[h!]
        \centering
        \vspace{-3em}
       \includegraphics[width=9cm, height=7cm]{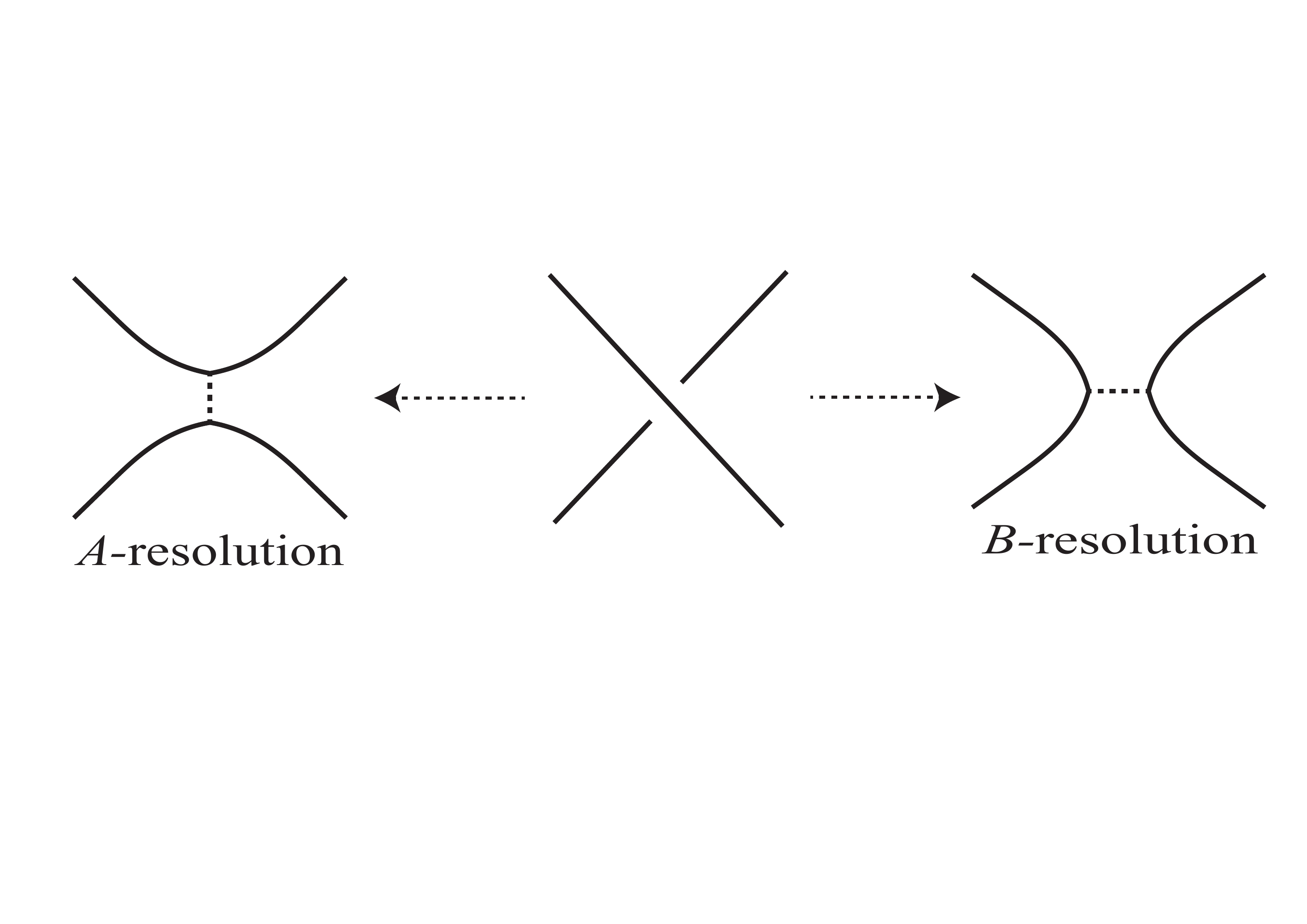}
       \vspace{-3em}
       \caption{$A$- and $B$-resolutions of a crossing.}
        \label{crossing resolution}
\end{figure}

\begin{corollary}\label{Cor: Infinitely many 3D examples}
There are infinitely many $W^3 \times [0,1)$ (up to homeomorphism) which are not pseudo-collarable.
\end{corollary}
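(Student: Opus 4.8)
The plan is to exhibit infinitely many pairwise non-homeomorphic contractible open $3$-manifolds $W^3$ of the type constructed above, each giving rise to a space $W^3 \times [0,1)$ that is $\mathcal{Z}$-compactifiable (by Proposition \ref{Prop: contractible is Z-compact}) but not pseudo-collarable (by the argument of Proposition \ref{Prop: W3 x [0,1) is not pseudo-collarable}). The freedom lies entirely in the choice of the knot $K$ used in the cube $C_l$ of the building block $L_l = T_l \backslash \operatorname{Int} T_l'$. As the construction preceding Figure \ref{3_1knot} makes explicit, in place of the trefoil one may use any knot $K$ whose exterior is tamely embedded in $C_l$, provided the resulting groups $\pi_1(K_j)$ remain hypoabelian so that Lemma \ref{Lemma:hypoabelian groups} applies and non-perfect-semistability is retained.

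First I would specify an infinite family of admissible seed knots. The remark preceding the statement observes that adequacy is preserved under the doubling (taking parallels) operation, and fibered knot groups are hypoabelian by Proposition \ref{Prop: 9_42 is hypoabelian}; more generally the hypotheses of Lemma \ref{Lemma: Satellite knot is hypoabelian} and Corollary \ref{Corollary: whitehead double is hypoabelian} are satisfied for any non-trivial knot $K$ with hypoabelian knot group. The natural candidate family is the $(2,2n+1)$ torus knots (equivalently, the alternating knots of this type), which are fibered with nonzero writhe and have hypoabelian knot groups; taking $K = T_{2,2n+1}$ for $n \geq 1$ yields infinitely many choices. For each such $K$, running the iterated twisted-Whitehead-double-and-connected-sum construction through Lemmas \ref{Lemma: Whitehead link is hypoabelian}, \ref{Lemma: Connected sum of knots is hypoabelian} and Corollary \ref{Corollary: whitehead double is hypoabelian} exactly as in Proposition \ref{Prop: W3 x [0,1) is not pseudo-collarable} shows the corresponding $W^3 \times [0,1)$ fails peripheral perfect $\pi_1$-semistability, hence is not pseudo-collarable, while Proposition \ref{Prop: contractible is Z-compact} gives $\mathcal{Z}$-compactifiability.

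The main obstacle, and the heart of the argument, is to certify that infinitely many of these manifolds are genuinely distinct up to homeomorphism rather than merely distinct as constructions. The cleanest invariant to use is the fundamental group at infinity itself, or more concretely the first term $\pi_1(K_1) \cong \pi_1(S^3 \backslash K)$ of Sequence (\ref{knot groups}), which is a topological invariant of the end of $W^3$ and hence of the homeomorphism type of $W^3 \times [0,1)$. To extract a numerical separating invariant I would pass to the Alexander polynomial: distinct torus knots $T_{2,2n+1}$ have distinct Alexander polynomials, and since the genus (equivalently the degree of the Alexander polynomial of the fibered seed knot) grows with $n$, these knot groups are pairwise non-isomorphic. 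One must check that this distinction survives in the pro-isomorphism type of the inverse sequence at infinity; I would argue that the bonding maps and the satellite/connected-sum operations build the higher $\pi_1(K_j)$ functorially on top of $\pi_1(K_1)$, so the seed knot group is recoverable (up to isomorphism) from the pro-$\pi_1$ at infinity, making the infinitely many Alexander polynomials an infinite family of distinguishing invariants.

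Finally I would assemble these pieces: for each $n \geq 1$ the seed $T_{2,2n+1}$ produces a contractible open $3$-manifold $W_n^3$ with $W_n^3 \times [0,1)$ $\mathcal{Z}$-compactifiable and not pseudo-collarable, and the sequence $\{W_n^3 \times [0,1)\}_{n \geq 1}$ consists of pairwise non-homeomorphic manifolds because their ends carry pairwise non-isomorphic pro-$\pi_1$ systems, detected by the Alexander polynomials of the seed knots. I expect the homeomorphism-classification step to require the most care, since it demands showing that a homeomorphism of products $W_m^3 \times [0,1) \to W_n^3 \times [0,1)$ forces an identification of the ends and hence of the pro-$\pi_1$ invariants; here the one-endedness and the explicit cofinal structure established in \S\ref{subsection: pi_1 at infinity of W} should allow me to reduce the problem to the invariance of pro-$\pi_1$ at infinity under homeomorphism, which is standard.
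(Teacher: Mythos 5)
Your setup is fine: the seeds $T_{2,2n+1}$ are alternating (hence adequate), fibered, of nonzero writhe, with hypoabelian knot groups, so each one feeds into Propositions \ref{Prop: contractible is Z-compact} and \ref{Prop: W3 x [0,1) is not pseudo-collarable} exactly as you say. The gap is in the step you yourself flag as the heart of the matter: distinguishing the resulting manifolds. You propose to read off the seed knot group (via its Alexander polynomial) from ``the first term $\pi_1(K_1)$ of Sequence (\ref{knot groups}), which is a topological invariant of the end.'' It is not. The only invariant attached to the end is the \emph{pro-isomorphism class} of the inverse sequence, and pro-isomorphism allows passing to arbitrary subsequences and interleaving ladders, so no individual term --- least of all the first --- is well defined. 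For a stable (or even pro-monomorphic) sequence one could extract the stable image, but these sequences are deliberately non-stable: the bonding maps are surjective and non-injective, which is precisely why the example defeats pseudo-collarability. Your fallback --- that the higher $\pi_1(K_j)$ are built ``functorially'' over $\pi_1(K_1)$ so the seed is recoverable from the pro-$\pi_1$ --- is an assertion, not an argument; to make it work you would have to rule out a commuting ladder between the two sequences of iterated twisted-Whitehead-double-plus-connected-sum knot groups for different seeds, and nothing in the paper's machinery does that.

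The paper avoids this entirely. It takes the seeds to be the $i$-fold connected sums $\mathcal{K}_i$ of a single fibered adequate knot of writhe $\geq 3$ and distinguishes the resulting $W^3$'s at the level of \emph{compact} pieces: each cube with handles $R_l\subset W^3$ embeds in $S^3$, one moves it to a canonical position by a homeomorphism of $S^3$, and then the Tait conjectures (additivity of crossing number under connected sum for adequate diagrams, plus invariance of crossing number) show no ambient isotopy carries $\mathcal{K}_i$ to $\mathcal{K}_j$ for $i\neq j$. That is a finite, classical-knot-theory obstruction rather than a pro-group one, and it is where the adequacy hypothesis earns its keep. If you want to salvage your version, you should either adopt that re-embedding argument (your torus-knot seeds would still work, since $T_{2,2n+1}$ have distinct crossing numbers) or supply an actual proof that the pro-isomorphism class of Sequence (\ref{knot groups}) determines the seed knot group; the latter is a substantially harder problem than the corollary itself.
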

\begin{proof}
It suffices to show that there are infinitely many distinct choices (up to ambient isotopy in $S^3$) for the knot $K$ in Figure \ref{3_1knot}. Take a fibered adequate knot $K$ with writhe $\geq 3$, e.g., a trefoil knot.  Note that the connected sum of fibered knots is fibered, thus, the corresponding knot group is hypoabelian. By the definition of adequate knot, the connected sum of fibered adequate knots is fibered adequate, one can produce an infinite sequence $\{\mathcal{K}_i\}$ of distinct fibered adequate knots by taking the $i$-fold connected sum of copies of $K$. We further employ the Tait conjectures \cite{Kau90, Mur87a, Mur87b, Thi87} to assure that the writhe of each $\mathcal{K}_i$ is greater than or equal to $3i$. Next, we shall show that $W^3 \times [0,1)$ constructed based on distinct pair $(\mathcal{K}_i,\mathcal{K}_j)$ $(i \neq j)$ are not homeomorphic. Note that each compact subset of $W^3$ embeds in $S^3$. For a cube with handles $R_l \subset W^3$ at stage $l$ in the construction, we take a homeomorphism of $S^3$ onto itself taking $R_l$ onto a canonical position (e.g., see Figure \ref{fig reembedding}). Since the Tait conjectures ensure the additivity of crossing numbers when taking connected sum, and the crossing number is a topological invariant, there is no ambient isotopy which moves $\mathcal{K}_i$ to $\mathcal{K}_j$ in $S^3$. Hence, the $W^3 \times [0,1)$ constructed based on $\mathcal{K}_i$ is not homeomorphic to the one constructed using $\mathcal{K}_j$.
\end{proof}

\begin{figure}[h!]
        \centering
       \includegraphics[width=10cm, height=8cm]{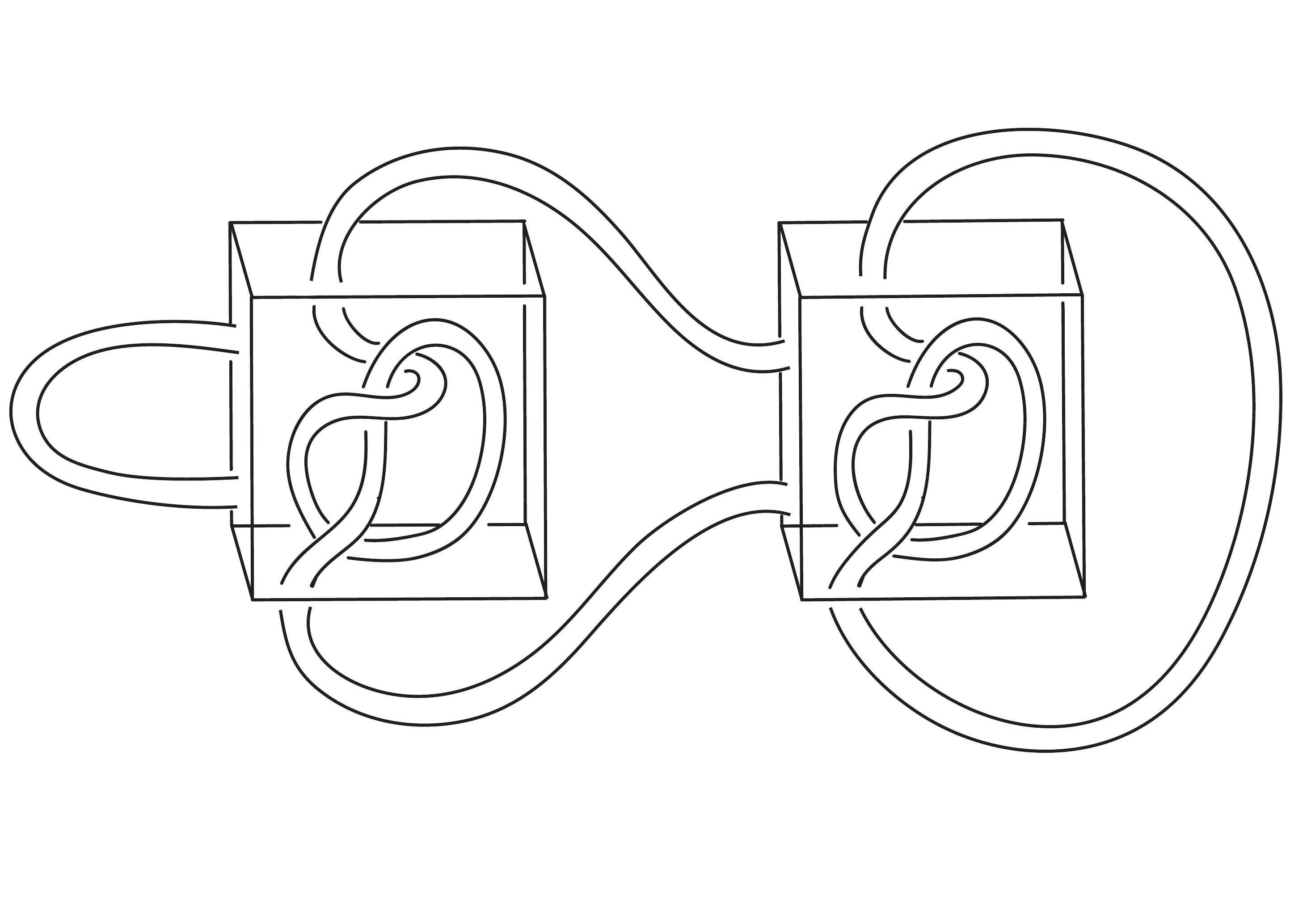}
       \caption{Reembedding of $R_l$ in $S^3$ based on a trefoil knot.}
        \label{fig reembedding}
\end{figure}

\subsection{High-dimensional examples}\label{subsection: High-dimensional examples}

%
%
%
%
%
%
In this section, we shall produce examples of higher dimensions modeled on $W^3 \times [0,1)$ constructed in \S \ref{subsection: pi_1 at infinity of W}.
\begin{proof}[Proof of Theorem \ref{Th: Z-compact not pseudo}]
The case $m = 4$ is just Propositions \ref{Prop: contractible is Z-compact} and \ref{Prop: W3 x [0,1) is not pseudo-collarable}. 

For cases $m >4$, let's let $W^3$ be as in \S \ref{subsection: pi_1 at infinity of W}. For convenience,
we denote $W^3 \times [0,1)$ by $X$. We shall show that $X \times S^{(m-4)}$ is $\mathcal{Z}$-compactifiable but not pseudo-collarable.
 Let $\hat{X}= X \sqcup \{\infty\}$ be the one-point compactification of $X$. By Proposition \ref{Prop: contractible is Z-compact}, $\hat{X}$ is a $\mathcal{Z}$-compactification of $X$. We invoke an alternative definition of $\mathcal{Z}$-compactification that there exists a homotopy $H: [0,1] \times \hat{X} \to \hat{X}$ such that $H_{0} = \operatorname{Id}_{\hat{X}}$ 
and $H_{t}(\hat{X})\subseteq \hat{X} - \{\infty\}$ for all $t > 0$. Subsequently, we define a homotopy $H' = H \times \operatorname{Id}:   [0,1] \times \hat{X}\times S^{(m-4)} \to \hat{X} \times S^{(m-4)}$
such that $H'_{0} = \operatorname{Id}_{\hat{X}\times S^{(m-4)}}$ and $H'_{t}(\hat{X} \times S^{(m-4)})\subseteq (\hat{X} - \{\infty\})\times S^{(m-4)}$ for all $t > 0$.
Thus, $\hat{X} \times S^{(m-4)}$ is a $\mathcal{Z}$-compactification of $X \times S^{(m-4)}$.

To see the breakdown of pseudo-collarability, we will consider the fundamental group at infinity of $X \times S^{(m-4)}$. More specifically, it suffices to show that the peripherally perfect $\pi_1$-semistability fails. One just need to consider the same cofinal sequence (\ref{pi_1 of W^3}) of neighborhoods of infinity as depicted in Section \ref{subsection: pi_1 at infinity of W}, and take the product of Sequence (\ref{pi_1 of W^3})  with $S^{(m-4)}$. That gives the sequence (with baserays surpressed) as follows.

\begin{equation}
\pi_{1}\left(  (N_0 \cup V_{1})\times S^{(m-4)}   \right)  {\longleftarrow}\pi_{1}\left(  (N_0 \cup V_{2}) \times S^{(m-4)}  \right)
{\longleftarrow}\pi_{1}\left(  (N_0\cup V_{3}) \times S^{(m-4)}\right) {\longleftarrow}\cdots
\label{pi_1 of W^3 x S^n}
\end{equation}

For $m \geq 6$, Sequence (\ref{pi_1 of W^3 x S^n}) is pro-isomorphic to Sequence (\ref{pi_1 of W^3}), which is an inverse sequence of hypoabelian groups. Hence, $X \times S^{(m-4)}$ is not peripherally perfectly $\pi_1$-semistable,
using Lemma \ref{Lemma:hypoabelian groups}.

For $m = 5$, Sequence (\ref{pi_1 of W^3 x S^n}) is pro-isomorphic to

\begin{equation}\label{hypoabelian sequence2}
\pi_{1}(N_0 \cup V_{1}) \times \mathbb{Z}  {\longleftarrow}\pi_{1}(N_0 \cup V_{2}) \times \mathbb{Z}
{\longleftarrow}\pi_{1}(N_0\cup V_{3})\times \mathbb{Z} {\longleftarrow}\cdots
\end{equation}
By the work in \S \ref{subsection: pi_1 at infinity of W}, $\pi_{1}(N_0 \cup V_{j})$ and $\mathbb{Z}$ are hypoabelian, therefore, Sequence (\ref{hypoabelian sequence2}) is an inverse sequence of hypoabelian groups. Apply Lemma \ref{Lemma:hypoabelian groups} and the necessity of Theorem \ref{Th: Characterization Theorem}.
\end{proof}


\section*{Acknowledgements}
I would like to thank Professor Craig Guilbault for introducing hypoabelian groups to me, and his marvelous instinct that $W^3 \times [0,1)$ is a counterexample. I also thank the anonymous referee for very helpful and constructive comments, especially, for pointing out the error in the proof of Proposition \ref{Prop: free product of hypoabelian} of the initial version of the paper.

\end{document}